\font\tenmsb=msbm10 scaled \magstep1
\font\sevenmsb=msbm7 scaled \magstep1
\font\fivemsb=msbm5 scaled \magstep1
\def\Bbb#1{{\fam\msbfam #1}}
\def\R{{\Bbb R}}
\def\Z{{\Bbb Z}}
\def\cy#1{\Z/{#1}\Z}
\def\smfrac#1#2{\frac{\scriptstyle#1}{\scriptstyle#2}}
\newcommand{\rta}[1]{\stackrel{#1}{\rightarrow}}
\newtheorem{thm}{Theorem}[section]
\newtheorem{lem}[thm]{Lemma}
\newtheorem{prop}[thm]{Proposition}
\newtheorem{cor}[thm]{Corollary}
\newtheorem{defn}[thm]{Definition}
\newtheorem{rmk}[thm]{Remark}
\newtheorem{rmks}[thm]{Remarks}
\newtheorem{hyp}[thm]{Hypotheses}
\title{Birationality of \'etale maps via surgery}
\author
{Scott Nollet\\
Department of Mathematics\\
Texas Christian University \\
Fort Worth, TX 76129
\and
Laurence R.~Taylor\\
Department of Mathematics\\
University of Notre Dame\\
Notre Dame, IN 46556
\and
Frederico Xavier
\thanks{Work partially supported by NSF
grant DMS02-03637.}\\
Department of Mathematics\\
University of Notre Dame\\
Notre Dame, IN 46556}
\date{}
\begin{document}

\maketitle

\begin{abstract}
We use a counting argument and surgery theory to show that if $D$ is a 
sufficiently general algebraic hypersurface in $\Bbb C^n$, 
then any local diffeomorphism $F:X \to \Bbb C^n$ of simply connected 
manifolds which is a $d$-sheeted cover away from $D$ has degree 
$d=1$ or $d=\infty$ (however all degrees $d > 1$ are possible if $F$ 
fails to be a local diffeomorphism at even a single point). 
In particular, any \'etale morphism $F:X \to \Bbb C^n$ of algebraic varieties which covers away from such a hypersurface $D$ must be birational.
\end{abstract}

\section{Introduction}

Polynomial maps $F: \Bbb C^n \to \Bbb C^n$ with nonvanishing jacobian 
have received much attention over the last 25 years, since the excellent expository article of Bass, Connell and Wright \cite{BCW} pointed out that 
the Jacobian Conjecture is still unsolved. The map $F$ has a finite degree 
$d>0$ and one formulation of the conjecture asks whether necessarily $d=1$. 
It is well-known that there is a Zariski open set $U \subset \Bbb C^n$ for 
which the restriction 
$F^{-1} (U) \to U$ is a $d$-sheet covering map \cite[Prop. 3.17]{mumford}. 
More recently, Jelonek has shown that if $D \subset \Bbb C^n$ is the closed 
set over which $F$ is not a $d$-cover, then either $D = \emptyset$ or 
$D$ is a uniruled hypersurface \cite{J}. He also gives a sharp degree bound for 
the equation of $D$ and explains how to compute its equation with a Gr\"obner basis. Of course if $D = \emptyset$, then $F$ is a covering map 
and simple connectivity of $\Bbb C^n$ implies that $d=1$ as conjectured, 
so one may study the problem in terms of the hypersurface $D$.

Kulikov \cite{kulikov2} considers the more general question of whether an 
\'etale morphism $F:X \to \Bbb C^n$ of simply connected varieties which is 
surjective modulo codimension two must be birational, i.e. have degree $d=1$. 
In terms of the divisor $D$ above, he observes that it is equivalent to ask whether any subgroup of $\pi_1 (\Bbb C^n - D)$ generated by geometric generators (loops about $D$ with winding number one) must have index one.
The answer is yes for $n=1$, but Kulikov constructs a counterexample 
for $n=2$ of degree $d=3$ by taking a quartic curve $D \subset \Bbb C^2$ with three cusps and producing a subgroup $G \subset \pi_1(\Bbb C^2 - D)$ 
of index three defined by a geometric generator \cite[\S 3]{kulikov2}. 
We will show by contrast that for general hypersurfaces 
$D \subset \Bbb C^n$, $F$ must indeed be birational. 

A hypersurface $D \subset \Bbb C^n$ is given by an equation $f(z_1,z_2,\dots,z_n)=0$. A theorem of Verdier says that the 
corresponding map $f: \Bbb C^n \to \Bbb C$ is a locally trivial fibration 
away from a finite subset of $\Bbb C$ \cite{verdier}. 
The smallest such set, the bifurcation locus $B_f \subset \Bbb C$, 
contains the image of the critical values of $f$, but may also contain 
the images of critical points at infinity: since $f$ is not proper, one cannot 
apply the Ehresmann fibration theorem. Generalizing work of H\`a Huy Vui and 
L\^e D$\sim$ ung Tr\'ang \cite{HL}, Parusi\'nski shows that a regular value $t_0$ for $f$ is not in $B_f$ if and only if the Euler characteristic of 
$f^{-1}(t_0)$ is locally constant at $t_0$ \cite{P}. 
If $f$ is the polynomial of minimal degree defining $D$, we will say 
that $D$ is {\it non-bifurcated} if $0$ is not a bifurcation value for $f$ 
(i.e. $0 \not \in B_f$, meaning $f$ is a trivial fibration in a neighborhood 
of $D=f^{-1}(0)$). More information on the behavior of 
polynomials can be found in \cite{dimca} and \cite{tibar}.

\begin{thm}\label{generic}
Fix $n > 1$ and let $D \subset \Bbb C^n$ be a smooth connected 
non-bifurcated hypersurface. 
If $F:X \to \Bbb C^n$ is a local diffeomorphism of simply connected manifolds
which is a $d$-fold covering map away from $D$, then $d=1$ or $d=\infty$.
\end{thm}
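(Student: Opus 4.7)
The plan is to pass to a tubular neighborhood of $D$, perform a monodromy count on the $d$-cover, and then use the simple connectivity of $X$ together with surgery to rule out finite $d > 1$.

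First, the non-bifurcation hypothesis gives a product tubular neighborhood $N \cong D \times \Bbb D$ of $D$, so that $N - D \cong D \times \Bbb D^*$. Applying van Kampen to $\Bbb C^n = N \cup (\Bbb C^n - D)$ and using $\pi_1(\Bbb C^n) = 0$ shows that a meridian $\mu$ of $D$ normally generates $\pi := \pi_1(\Bbb C^n - D)$. Since $F^{-1}(D)$ has real codimension $2$ in $X$, the preimage $F^{-1}(\Bbb C^n - D)$ is connected, hence corresponds to a single subgroup $G \subset \pi$ of index $d$; the meridian monodromy gives a permutation $\sigma \in S_d$ on the coset space $\pi/G$.

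Second, a counting argument controls the fiber over $D$. For $p \in D$, a transverse disk $\Delta$ at $p$ pulls back so that each component of $F^{-1}(\Delta)$ is either a one-sheeted trivial extension (a fixed point of $\sigma$) or a $k$-fold cyclic cover of $\Delta^*$ with $k \geq 2$ that does not extend over the center, since $z \mapsto z^k$ is not a local diffeomorphism at $0$. Thus $|F^{-1}(p)| = f$, the number of fixed points of $\sigma$, independent of $p$. Applying van Kampen on $X = (X - F^{-1}(D)) \cup \tilde N$---where $\tilde N$ is a tubular neighborhood of $F^{-1}(D)$ with trivial normal bundle pulled back from that of $D$---the hypothesis $\pi_1(X) = 0$ forces $G$ to be normally generated as a subgroup of itself by meridians $\tilde\mu_i$ of the components $\tilde D_i$ of $F^{-1}(D)$, and each $F_*(\tilde\mu_i)$ is a conjugate $g_i \mu g_i^{-1}$ of $\mu$ in $\pi$.

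The hardest step is passing from these data to $d = 1$ when $d$ is finite, since purely combinatorial examples ($\pi = S_3$, $\mu = (12)$, $G = \langle(12)\rangle$) show this cannot follow from group theory alone. I would use surgery on the non-extending components of $F^{-1}(N - D)$---those corresponding to $\sigma$-orbits of length $\geq 2$, each a cyclic cover of $D \times \Bbb D^*$ with a non-compact end at the missing $D \times \{0\}$---using the product structure from non-bifurcation to trade each such end for trivial one-sheeted extensions, producing a simply connected $X'$ and a cover in which every sheet extends. Then $f = d$, so $\mu \in G$, and combining with the normal generation of $G$ in itself by conjugates of $\mu$ together with $\mu$ normally generating $\pi$ forces $G = \pi$, i.e., $d = 1$. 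The delicate point is verifying that the surgery preserves simple connectivity while respecting the cover structure; this is where non-bifurcation (and not merely smoothness) plays its decisive role.
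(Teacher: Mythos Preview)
Your setup through the monodromy permutation $\sigma$ and the fiber count $|F^{-1}(p)| = f$ is correct and useful. The genuine gap is the surgery step, and it is not merely delicate---as described it cannot be carried out. A component of $F^{-1}(N-D)$ corresponding to a $\sigma$-orbit of length $k\geq 2$ is a connected $k$-fold cyclic cover of $D\times\Bbb D^*$; its outer boundary is a \emph{single} copy of $D\times S^1$ sitting inside $X$. You propose to replace this by ``trivial one-sheeted extensions,'' i.e.\ by $k$ disjoint copies of $D\times\Bbb D$, but those have $k$ boundary components, so there is nothing to glue them to. If instead you cap the end with one copy of $D\times\Bbb D$, the resulting map to $\Bbb C^n$ is $z\mapsto z^k$ transversally and fails to be a local diffeomorphism along the new $D$-copy, which defeats the purpose. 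In short, there is no surgery on the source $X$ that simultaneously (i) matches boundaries, (ii) keeps $F$ a local diffeomorphism, and (iii) turns non-extending sheets into extending ones. Your own example $\pi=S_3$, $\mu=(12)$ shows why something geometric beyond the monodromy data is needed, but the proposed construction does not supply it.

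The paper's argument performs surgery in the \emph{target} rather than the source. Using non-bifurcation, one takes $A=P^{-1}(\ell)$ for an open ray $\ell\subset\Bbb C$ from $0$, an oriented real hypersurface with $\overline A\cong D\times[0,\infty)$ nicely bounding $D$. One then does ambient $2$-handle surgery on $A$ inside $\Bbb C^n$ (this needs $2n\geq 6$; for $n=2$ one stabilizes by crossing with $\Bbb C$) to kill generators of $\pi_1(A)$, producing an oriented $B$ still nicely bounding $D$ with $\pi_1(\Bbb C^n-\overline B)=0$. The dichotomy then comes from a counting argument on $F^{-1}(B)$: if no component is closed in $X$, one shows $X-F^{-1}(\overline B)$ is path-connected, hence a $1$-sheeted cover of the simply connected $\Bbb C^n-\overline B$, so $d=1$; if some component $E$ is closed, then $H_1(X)=0$ forces $E$ to separate $X$, and comparing preimage counts on the two sides of $E$ (using the orientation of $B$ to align half-balls coherently) contradicts $d<\infty$. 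The non-bifurcation hypothesis is used to get $\pi_1(D)\twoheadrightarrow\pi_1(\overline A)$, which is what makes the handle-trading on $A$ go through.
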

 
For $D \subset \Bbb C^n$ as in \ref{generic}, we highlight two algebro-geometric cases: 

\begin{cor}\label{kulikov's} Let $D \subset \Bbb C^n$ be a smooth connected non-bifurcated hypersurface. 
If $F:X \to \Bbb C^n$ is an \'etale morphism with $X$ simply 
connected and $\#F^{-1}(q)=\deg F$ for $q \not \in D$, then $F$ is birational.
\end{cor}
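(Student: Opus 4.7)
My plan is to deduce the corollary directly from Theorem~\ref{generic} by checking its hypotheses after recasting $F$ in the smooth (analytic) category. First, since $F:X\to\mathbb{C}^n$ is \'etale with smooth target, $X$ is automatically smooth, and $F$ is a local biholomorphism, hence a local diffeomorphism of complex manifolds. Both $X$ (by assumption) and $\mathbb{C}^n$ are simply connected, so two of the three hypotheses of the theorem come for free.

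The only piece requiring real work is to upgrade the local-homeomorphism property to a genuine $d$-sheeted topological covering over $\mathbb{C}^n\setminus D$, where $d=\deg F$. Because \'etale morphisms are quasi-finite, every fiber of $F$ is finite, and the hypothesis gives $|F^{-1}(q)|=d$ for every $q\in\mathbb{C}^n\setminus D$; in particular $d<\infty$. A standard point-set argument then promotes the local diffeomorphism to a covering map: given $q\in\mathbb{C}^n\setminus D$ with preimages $p_1,\dots,p_d$, I would choose pairwise disjoint open neighborhoods $U_i\ni p_i$ each mapping homeomorphically onto an open $V_i\ni q$, and set $V=\bigcap_i V_i$. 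The constancy of the fiber cardinality on $V$ forces $F^{-1}(V)=\bigsqcup_i(U_i\cap F^{-1}(V))$, which is exactly the local trivialization required. Thus $F$ restricts to a $d$-fold cover over $\mathbb{C}^n\setminus D$.

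With the hypotheses of Theorem~\ref{generic} now in place, I conclude that $d=1$ or $d=\infty$. Since $d$ is by construction the finite cardinality of any fiber over $\mathbb{C}^n\setminus D$, the alternative $d=\infty$ is ruled out and $d=1$, which is exactly birationality of $F$. The only step that is not a pure translation between the algebraic and smooth viewpoints is the verification that constancy of finite fiber size plus the local-homeomorphism property yields a genuine covering, and that is where I expect whatever technical care is needed to be concentrated.
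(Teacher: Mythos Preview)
Your proposal is correct and matches the paper's intent: the paper states Corollary~\ref{kulikov's} as an immediate specialization of Theorem~\ref{generic} without a separate proof, and you have supplied exactly the routine verification of its hypotheses. The one substantive step you single out---that a local homeomorphism with constant finite fiber cardinality over $\mathbb{C}^n\setminus D$ is a genuine covering there---is handled correctly by your point-set argument (just make sure to intersect $V$ with $\mathbb{C}^n\setminus D$ so that the fiber-counting applies to every point of $V$).
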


\begin{cor}\label{jc} Let $D \subset \Bbb C^n$ be a smooth connected non-bifurcated hypersurface. 
If $F:\Bbb C^n \to \Bbb C^n$ is a polynomial map with nonvanishing 
jacobian and $\#F^{-1}(q)=\deg F$ for $q \not \in D$, then 
$F \in {\rm {Aut}}(\Bbb C^n)$. 
\end{cor}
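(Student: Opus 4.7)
The idea is to reduce Corollary~\ref{jc} to Corollary~\ref{kulikov's} and then upgrade ``birational'' to ``automorphism of $\Bbb C^n$''. First, since the jacobian of $F$ is nowhere vanishing, $F$ is \'etale; in particular it is a local diffeomorphism in the analytic topology, and together with the hypothesis $\#F^{-1}(q)=\deg F$ for every $q\not\in D$, this makes the restriction $F:F^{-1}(\Bbb C^n-D)\to\Bbb C^n-D$ a $d$-sheeted covering map with $d=\deg F$. Because $\Bbb C^n$ is simply connected and $D$ is smooth, connected and non-bifurcated, Corollary~\ref{kulikov's} forces $d=1$, so $F$ is birational.

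Next I would upgrade ``birational'' to ``injective''. Over $\Bbb C^n-D$ injectivity is already known. If distinct points $p_1,p_2\in\Bbb C^n$ satisfied $F(p_1)=F(p_2)=q\in D$, then \'etaleness at each $p_i$ would furnish disjoint open neighbourhoods $U_i$ of $p_i$ each mapping diffeomorphically to a common open neighbourhood $V$ of $q$; any point of $V-D$ (which is nonempty, since $D$ is a proper hypersurface) would then have at least two preimages, contradicting the single-sheet covering property over $\Bbb C^n-D$. Hence $F$ is globally injective.

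Finally, by the classical theorem of Bia{\l}ynicki-Birula--Rosenlicht (equivalently, Ax's injective-implies-surjective theorem), any injective polynomial self-map of $\Bbb C^n$ is automatically bijective; combined with the \'etale hypothesis, bijectivity forces the inverse to be regular, and hence $F\in{\rm Aut}(\Bbb C^n)$.

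The only substantive input is Corollary~\ref{kulikov's}; the step I expect to require the most care in writing up is ``birational $\Rightarrow$ injective everywhere'', which depends essentially on the \'etale hypothesis holding at every point of $\Bbb C^n$ (not merely off $D$) in order to propagate the single-sheet behaviour across $D$. An alternative finishing argument---using Zariski's Main Theorem to conclude that $F$ is an open immersion and then ruling out $F(\Bbb C^n)=\Bbb C^n-D$---would require engaging more seriously with the non-bifurcated hypothesis, which is why I prefer the injectivity route.
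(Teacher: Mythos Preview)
Your proposal is correct and follows the paper's intended route: the paper states Corollary~\ref{jc} as an immediate special case of Theorem~\ref{generic} (equivalently of Corollary~\ref{kulikov's}) and offers no further argument, treating the passage from ``degree one'' to ``$F\in\mathrm{Aut}(\Bbb C^n)$'' as a well-known fact in the Jacobian-conjecture literature (see e.g.\ \cite{BCW}). Your injectivity argument together with Bia\l ynicki-Birula--Rosenlicht is a perfectly good way to spell out that step; an equally standard shortcut is to note directly that an \'etale birational morphism $\Bbb C^n\to\Bbb C^n$ is an isomorphism.
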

 
The conclusion of \ref{kulikov's} also holds if $D$ has at worst simple normal 
crossings away from a set of codimension $\geq 3$ and meets the 
hyperplane at infinity transversely \cite{NX3}. 
It would be interesting to try to weaken the hypothesis of Theorem \ref{generic}, for example 
what happens if $D$ merely smooth and connected? Or if $D$ is a non-bifurcation hypersurface but reducible? 

To state our main tool for obtaining these results, we need a definition. 
An $(n-1)$-submanifold 
$A \subset \Bbb R^n$ {\bf nicely bounds} a closed subset 
$D \subset \Bbb R^n$ if
\begin{enumerate}

\item $D=\partial A = {\overline A}-A$.
\item Each connected component of ${\overline A}$ contains exactly 
one connected component of $D$.
\item $D$ is the closure of an $(n-2)$-submanifold $D_0 \subset \Bbb R^n$  
with singular locus $\Sigma = D - D_0$ of codimension $\geqslant 4$ in 
$\Bbb R^n$ and ${\overline A}$ is locally diffeomorphic to a half space 
along $D_0$. 

\end{enumerate}

\begin{rmk}\label{existence}{\em
(a) For typical hypersurfaces $D \subset \Bbb C^n$, the existence of a nicely 
bounding manifold $A$ is natural. Suppose that $D$ is defined by the equation 
$P(z_1,z_2,\dots,z_n)=0$ and $S \subset \Bbb C$ is a finite set for which 
the restriction $P: \Bbb C^n - P^{-1}(S) \to \Bbb C - S$ is a locally trivial fibration with fibre $f$ as in Verdier's theorem \cite{verdier}. 
If $f$ is smooth and connected, then in choosing an open ray $l \subset \Bbb C$ 
emanating from $0$ which misses $S$, the contractibility of $l$ implies that 
$A = P^{-1}(l) \cong l \times f$ is an oriented manifold and it is clear that $A$ 
nicely bounds $D$. 
If $D$ is a non-bifurcation hypersurface, then $D \cong f$ and 
${\overline A} \cong {\overline l} \times D$, hence we obtain an 
isomorphism $\pi_1(D) \to \pi_1({\overline A})$.

(b) If $F:X \to \Bbb C^n$ is a $d$-fold cover away from $D$ 
(with equation $P=0$), we can arrange the situation in (a) by enlarging $D$ 
as follows. Replace $P$ with $PQ$, where $Q$ is general and 
$\deg (PQ)$ is prime. Then the hypothesis on $F$ is preserved, 
$D$ is connected and $P$ cannot be be written $h(g(z_1, \dots, z_m))$ 
for  $h(t) \in \Bbb C [t]$ of degree $>1$, which implies that the general 
fibre $f=P^{-1}(t)$ is smooth and connected by a Bertini theorem 
\cite[Cor. 1 to Thm. 37]{schinzel}. Of course this divisor $D$ may be bifurcated. 

\em}\end{rmk}

The degree of such maps often satisfies a certain trichotomy (Thm. \ref{unorient}):  

\begin{thm}\label{counting}
Let $F:X \to \Bbb R^n$ be a local diffeomorphism of connected 
manifolds with $H_1 (X, \Bbb Z)=0$ and let $D \subset \Bbb R^n$ be a 
closed set for which the restriction $X - F^{-1}(D) \to \Bbb R^n - D$ is a 
$d$-sheeted covering map. 
If $D$ is nicely bounded by a connected $(n-1)-$submanifold 
$A \subset \Bbb R^n$ with $\Bbb R^n - {\overline A}$ simply connected, 
then $d=1,2$ or $\infty$. 
\end{thm}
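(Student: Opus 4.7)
The plan is to trivialize the cover over $W=\R^n-\overline A$ using its simple connectivity, analyze the combinatorics of the gluings along $F^{-1}(A)$, and then impose $H_1(X,\Z)=0$ via a Mayer--Vietoris calculation. Since $W$ is simply connected and $F^{-1}(W)\to W$ is a $d$-sheeted cover, I first write $F^{-1}(W)=\bigsqcup_{i=1}^d W_i$ with each $F|_{W_i}$ a homeomorphism (the case $d=\infty$ is a degenerate case where the finite combinatorial contradictions do not apply). Decomposing $F^{-1}(A)=\bigsqcup_l A_l$ into connected components with $A_l\to A$ of degree $d_l$ (so $\sum d_l=d$), the core observation is a tubular-neighborhood argument: letting $N$ be a tube around $A$ in $\R^n$ and $\tilde N_l$ the tube of $A_l$ in $X$, each connected component of $\tilde N_l\setminus A_l$ lies in a single sheet $W_i$ and maps bijectively onto a connected component of $N\setminus A$, because $W_i\to W$ is a homeomorphism.

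The analysis now branches on the coorientability of $A$ in $\R^n$. When $A$ is coorientable, $N\setminus A$ has two sides each diffeomorphic to $A$, forcing each $d_l=1$ and giving a sheet permutation $\sigma$ of $\{1,\dots,d\}$. When $A$ is non-coorientable, $N\setminus A$ is connected and isomorphic to the orientation double cover $\tilde A$ of $A$; the requirement that a $d_l$-sheeted cover of $N$ restrict to a trivial cover of $N\setminus A$ translates, via the subgroups of $\pi_1(A)$ containing the kernel of the orientation character, to $d_l\in\{1,2\}$, with $d_l=2$ exactly when $A_l\to A$ is the orientation double cover. I then build a bipartite sheet graph $\Gamma$ with vertices $\{W_i\}\cup\{A_l\}$ and one edge per connected component of $\tilde N_l\setminus A_l$ (recording which sheet it sits in). Since $X-F^{-1}(D)$ is connected (codimension-two removal preserves connectedness for $n\geq 2$), $\Gamma$ is connected. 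Counting yields: the coorientable case has $|V|=|E|=2d$, and connectedness makes $\sigma$ a single $d$-cycle; the non-coorientable case has $|V|=d+k$ and $|E|=d$, so $\beta_1(\Gamma)=1-k\geq 0$ forces $k=1$, hence $d=d_1\in\{1,2\}$.

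Finally, a Mayer--Vietoris computation with $U_1=\bigsqcup W_i$, $U_2=\bigsqcup \tilde N_l$ yields $H_1(X-F^{-1}(D))$: in the coorientable case one obtains $\Z$, generated by the lift $\tilde\gamma$ of $\gamma^d$, where $\gamma$ is a meridian of $D$; in the non-coorientable $d=2$ case one obtains $0$ (the differential on $H_1$ is surjective and the one on $H_0$ is injective). Since $F$ is a local diffeomorphism, for any $x\in F^{-1}(D)$ the meridian $\mu_x$ satisfies $F_*(\mu_x)=1\in H_1(\R^n-D)\cong\Z$, while $F_*(\tilde\gamma)=d$. The hypothesis $H_1(X,\Z)=0$ combined with the inclusion $X-F^{-1}(D)\hookrightarrow X$ forces meridians of $F^{-1}(D)$ to generate $H_1(X-F^{-1}(D))$; in the coorientable case, writing $\mu_x=m\tilde\gamma$ then gives $md=1$ in $\Z$, so $d=1$. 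Together with the non-coorientable bound, the only finite possibilities are $d\in\{1,2\}$, leaving $d\in\{1,2,\infty\}$.

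The main obstacle will be the Mayer--Vietoris computation in the non-coorientable $d=2$ case: one must show $H_1(X-F^{-1}(D))=0$ (rather than $\Z$), so that the meridian calculation does not force a contradiction with $H_1(X,\Z)=0$. This relies on the fact that $A_l\to A$ being the orientation double cover makes the inclusion $\tilde N_l\setminus A_l\hookrightarrow\tilde N_l$ surjective on $H_1$, killing the $H_1(A)$ contribution that would otherwise survive and reinstate the meridian obstruction.
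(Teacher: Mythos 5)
Your approach is genuinely different from the paper's. The paper argues by a direct geometric counting argument: it splits into the case where no component of $F^{-1}(A)$ is closed (handled by a ``go around the edge'' path argument, Prop.~2.2) and the case where some component is closed (handled by the separation Lemma~2.4 and a fiber-count near a small ball straddling $A$, Prop.~2.5). You instead trivialize the cover over $W=\R^n-\overline A$, analyze the gluing combinatorics via a bipartite graph, and finish with Mayer--Vietoris and a meridian argument using $H_1(X,\Z)=0$. This is an interesting alternative, and your non-coorientable case (the $\beta_1(\Gamma)\geq 0$ count forcing $k=1$, $d\leq 2$) is fine. However, there is a genuine gap in the coorientable case.

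The gap is the claim that the generator of $H_1(X-F^{-1}(D))\cong\Z$ satisfies $F_*(\tilde\gamma)=d$, with the related claim that ``connectedness makes $\sigma$ a single $d$-cycle.'' The bipartite graph argument gives $\beta_1(\Gamma)=1$, i.e.~a \emph{unique} circuit, but connectedness alone does not force that circuit to visit all $d$ of the $A_l$ vertices (graphs with all $A_l$-vertices of valence $2$, $\beta_1=1$, and $2d$ vertices can perfectly well be a short cycle with trees attached), nor does it force the crossings of $A$ along the circuit to all have the same coorientation sign. In fact, as soon as some $\overline{A_l}$ meets $F^{-1}(D)$, both edges at that $A_l$ land in the same sheet $W_i$ (locally $X-F^{-1}(\overline A)$ is connected near a boundary point of $A_l$), producing a self-loop; the unique circuit is then that self-loop and $F_*(\tilde\gamma)=\pm 1$, so your final equation $md=\pm1$ delivers nothing. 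What actually makes the argument work is a fact you never establish: that $W$ simply connected and $A$ connected force $\pi_1(\R^n-D)\cong\Z$ (coorientable) or $\cy2$ (non-coorientable), by Van Kampen applied to $\R^n-D=W\cup N$ where $N$ is a half-tube around $A$. Once $\pi_1(\R^n-D)\cong\Z$ is known, the cover corresponds to the subgroup $d\Z$, $F_*$ on $H_1$ is multiplication by $d$, and together with the fact that $H_1(X)=0$ forces meridians (each mapping to $\pm1$) to generate $H_1(X-F^{-1}(D))$, you get $d\mid 1$. I'd recommend proving the $\pi_1$ computation as an explicit lemma; once it is in place, the bipartite graph and the Mayer--Vietoris detour become unnecessary and both cases close in a few lines.
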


The conclusion can be false if $F$ fails to be a local diffeomorphism at 
even a {\it single point} (visualize the $d$th power map on $\Bbb C$ 
for $d > 2$).

\begin{rmk}\label{oriented}
If the nicely bounding manifold $A$ is oriented, but possibly disconnected, 
then $d=1$ or $d=\infty$ (Theorem \ref{count}). 
\end{rmk}

\begin{rmk}\label{sharp}{\em
Theorem \ref{counting} is sharp as follows. 

(a) All three values of $d$ occur. The identity map realizes $d=1$ 
(for example take $D=\Bbb R^{n-2} \times (0,0)$ and 
$A=\Bbb R^{n-2} \times (0, \infty) \times 0$).
The value $d=2$ is realized by a map $F:X \to \Bbb R^4$ with 
$D \subset \Bbb R^4$ an embedded Klein bottle which is nicely bounded by 
$M \times [-2,2]$, where $M \subset \Bbb R^3$ is a Moebius band (see Example \ref{d=2}). 
The value $d=\infty$ is achieved by the complex exponential map 
$F:\Bbb C \to \Bbb C$ with $D = \{0\}$.

(b) The vanishing $H_1(X, \Bbb Z)=0$ is necessary, for example 
the $d$th power map $F:\Bbb C - \{0\} \to \Bbb C$ is a $d$-sheet 
covering map.

\em}\end{rmk}

\begin{cor}\label{2}
Let $F:X \to \Bbb R^2$ be a local diffeomorphism.
Assume that $X$ is connected, $H_1(X, \Bbb Z)=0$ and the restriction
$F:X - f^{-1}(D) \to \Bbb R^2 - D$
is a $d$-sheeted cover for a finite subset $D \subset \Bbb R^2$.
Then $d=1$ or $d=\infty$.
\end{cor}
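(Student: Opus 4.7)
The plan is to apply the oriented strengthening of Theorem~\ref{counting} recorded in Remark~\ref{oriented} (namely Theorem~\ref{count}). Once I exhibit an oriented $1$-submanifold $A \subset \Bbb R^2$ that nicely bounds $D$ and has $\Bbb R^2 - \overline{A}$ simply connected, the conclusion $d=1$ or $d=\infty$ drops out immediately.

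The case $D = \emptyset$ is trivial: then $F$ is a covering map with simply connected target $\Bbb R^2$, forcing $d=1$. Otherwise write $D = \{p_1, \dots, p_k\}$. After an orthogonal change of coordinates I may assume the $p_i$ have pairwise distinct $y$-coordinates. Let $R_i \subset \Bbb R^2$ be the closed horizontal ray emanating from $p_i$ in the $+x$ direction and set
\[
A \;=\; \bigsqcup_{i=1}^{k} \bigl(R_i - \{p_i\}\bigr).
\]
Since the $R_i$ are pairwise disjoint, $A$ is a disjoint union of open rays, hence a $1$-submanifold carrying a canonical orientation (each ray oriented outward from its $p_i$).

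I would then check the three conditions of "nicely bounds": $\partial A = \overline{A} - A = \{p_1, \dots, p_k\} = D$; each connected component $R_i$ of $\overline{A}$ contains the single point $p_i$ of $D$; and taking $D_0 = D$ (a $0$-submanifold, since $D$ is discrete) gives singular locus $\Sigma = \emptyset$ of vacuous codimension $\geqslant 4$, while $\overline{A}$ is locally diffeomorphic at each $p_i$ to the half-line $[0,\infty) \subset \Bbb R^1$. For simple connectivity of the complement, fix $x_0$ strictly less than every $x$-coordinate of a $p_i$: the horizontal deformation $(x,y) \mapsto ((1-t)x + tx_0,\,y)$ retracts $\Bbb R^2 - \overline{A}$ onto the vertical line $\{x = x_0\}$, since any horizontal segment can meet some $R_i$ only at height $y = y_i$, and in that case both endpoints have $x$-coordinate $< x(p_i)$, so the entire segment lies strictly to the left of $R_i$.

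With the hypotheses in place, Theorem~\ref{count} delivers $d=1$ or $d=\infty$. There is no substantive obstacle; the only minor care needed is ensuring the rays are disjoint, which any generic direction, or my choice of distinct $y$-coordinates, provides.
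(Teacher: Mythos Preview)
Your proposal is correct and follows precisely the same route as the paper: the paper's one-line argument for Corollary~\ref{2} is to apply Remark~\ref{oriented} with $A$ a union of disjoint open rays emanating from the points of $D$, so that $\Bbb R^2 - \overline{A}$ is contractible. You have simply fleshed out the details---choosing a direction making the rays disjoint, verifying the ``nicely bounds'' conditions, and exhibiting an explicit deformation retraction---all of which are sound.
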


Corollary \ref{2} follows from Remark \ref{oriented} by taking $A$ to be the 
union of disjoint open rays emanating from points in $D$, when
$\Bbb R^2 - {\overline A}$ is contractible.
For hypersurfaces $D \subset \Bbb C^n$ with $n>1$ it is unlikely that the 
submanifold $A$ produced in Remark \ref{existence} will satisfy 
$\pi_1(\Bbb C^n - {\overline A})=0$, hence Theorem \ref{counting} does not apply.  
In typical situations surgery theory \cite{ranicki} can be used to kill homotopy 
classes of maps and we can modify $A$ to arrive at a suitable manifold $B$. 
For this we need more conditions on the nicely bounding manifold $A$. 
\begin{enumerate}
\item $D$ has finitely many connected components $D_i$ contained in 
corresponding components $A_i \subset A$. 
\item If $\Sigma \subset D$ is the singular locus, then 
$\pi_1(D_i - \Sigma) \to \pi_1({\overline {A_i}} - \Sigma)$ is onto. 
\item Each component $A_i$ is orientable with $\pi_1(A_i)$ finitely generated.
\end{enumerate}

\begin{thm}\label{larry}
Suppose that $D$ is nicely bounded by a submanifold $A \subset \Bbb R^n$ 
satisfying the conditions 1-3 above and $n \geq 6$. 
Then there is another such $B$ nicely bounding $D$
such that $\pi_1(\Bbb R^n - {\overline B})=0$.
\end{thm}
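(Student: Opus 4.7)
The plan is to kill $\pi_1(\Bbb R^n - \overline{A})$ one generator at a time by performing ambient surgery on $A$ along framed embedded circles, using the simple connectivity of $\Bbb R^n$ to supply bounding disks and the hypothesis $n \geq 6$ to guarantee transversality and good framings.

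First I would check that $\pi_1(\Bbb R^n - \overline{A})$ is finitely generated. Let $U = \Bbb R^n - \overline{A}$ and let $V$ be a tubular neighborhood of $\overline{A}$. Then $V$ deformation retracts onto $\overline{A}$, whose fundamental group equals $\pi_1(A)$ up to removal of $\Sigma$ (codimension $\geq 3$ in $\overline A$), hence is finitely generated by condition 3. The intersection $U \cap V$ deformation retracts onto the unit normal sphere bundle of $\overline{A}$, which is the double of $\overline{A}$ along $D$; condition 2 lets me control its $\pi_1$ via a further Van Kampen argument. Applying Van Kampen to $\Bbb R^n = U \cup V$ then exhibits $\pi_1(U)$ as a finitely generated quotient.

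Next I pick a finite generating set for $\pi_1(U)$ and represent each generator by an embedded circle $\gamma_j \subset U$. Since $\Bbb R^n$ is simply connected each $\gamma_j$ bounds a disk in $\Bbb R^n$, and general position (using $n \geq 5$) makes these bounding disks $\Delta_j$ embedded, pairwise disjoint, and transverse to $A$. Because $\partial \Delta_j = \gamma_j$ is disjoint from $A$, the intersection $\Delta_j \cap A$ is a disjoint union of embedded circles in the interior of $A$; processing innermost circles first, each such circle $C$ bounds a sub-disk $\Delta_C \subset \Delta_j$ whose open interior lies in $\Bbb R^n - \overline{A}$. For each pair $(C, \Delta_C)$ I perform an ambient surgery on $A$ using $\Delta_C$ as the core: a tubular neighborhood of $\Delta_C$ in $\Bbb R^n$ has the form $\Delta_C \times D^{n-2}$, and the surgery deletes the $S^1 \times D^{n-2}$ neighborhood of $C$ in $A$ and inserts the sphere bundle $\Delta_C \times S^{n-3}$. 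After this surgery, the disk $\Delta_j$ (capped off near $C$ through a slice of $\Delta_C \times D^{n-2}$) lies in the new complement, so $[\gamma_j]$ dies; the newly introduced spheres $\{*\} \times S^{n-3}$ have codimension $3$ in $\Bbb R^n$ and thus contribute no new generators to $\pi_1$ of the complement. Since the surgery occurs in the interior of $A$ and uses an orientation-compatible disk framing, the nicely-bounding structure along $D$ is preserved, as are conditions 1 and 3 (using that surgery on a $1$-sphere in a manifold of dimension $n-1 \geq 5$ preserves connectivity and orientability), and condition 2 is preserved because the new surjection $\pi_1(D_i - \Sigma) \to \pi_1(\overline{B_i} - \Sigma)$ is the composite of the old one with a further quotient. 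Iterating over all generators produces the desired $B$.

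The main obstacle will be the careful bookkeeping needed to verify that each ambient surgery can indeed be realized inside $\Bbb R^n$ from the bounding-disk framing, that conditions 1-3 are preserved at every intermediate stage, and that meridional loops introduced by successive surgeries do not revive $\pi_1$ of the complement. The dimension bound $n \geq 6$ is exactly what makes the general position, embedded bounding disks, framed surgery, and component-preservation arguments all work simultaneously.
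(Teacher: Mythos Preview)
Your strategy differs from the paper's: you attack $\pi_1(\Bbb R^n-\overline{A})$ directly by surgering $A$ along the intersection circles with bounding disks for loops in the complement, whereas the paper first performs surgery on $A$ to kill $\pi_1(A)$ one generator at a time (Lemma~\ref{reduction}) and then proves separately (Lemma~\ref{simply-connected A}) that $\pi_1(A)=0$ forces $\pi_1(\Bbb R^n-\overline{A})=0$.

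There is a genuine gap. You assert that, after general position, $\Delta_j\cap A$ is a disjoint union of embedded circles. But $D_0$ has codimension~$2$ in $\Bbb R^n$, so a generic $2$--disk meets $D_0$ transversely in a nonempty finite set of points; near each such point $\overline{A}$ is modeled on a half-space, and $\Delta_j\cap\overline{A}$ acquires an \emph{arc} with that point as an endpoint. Thus $\Delta_j\cap A$ consists of circles together with open arcs limiting on $D_0$. Surgering away the circles does nothing to these arcs: after all your handle trades the modified disk still crosses $\overline{B}$, and $\gamma_j$ has no reason to become null-homotopic in $\Bbb R^n-\overline{B}$. This is exactly where condition~2 does real work in the paper. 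To eliminate an arc $\alpha$ one completes it to a loop $\alpha\cup\alpha'$ with $\alpha'\subset D_0$; the surjectivity $\pi_1(D_i-\Sigma)\to\pi_1(\overline{A_i}-\Sigma)$ is what lets one choose $\alpha'$ so that this loop is null-homotopic in $\overline{A}$, enabling a Whitney-type isotopy that removes $\alpha$ without creating new intersections. Your proposal invokes condition~2 only for the finite-generation step and for checking that the hypotheses persist, never for this essential arc removal.

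A secondary issue: the Van Kampen argument you sketch for finite generation of $\pi_1(U)$ is incomplete. From the pushout $\pi_1(U)\ast_{\pi_1(U\cap V)}\pi_1(V)=1$ with $\pi_1(U\cap V)$ and $\pi_1(V)$ finitely generated one cannot read off finite generation of $\pi_1(U)$ without further argument. The paper sidesteps this entirely by working with $\pi_1(A)$, which is finitely generated by hypothesis.
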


Now Theorem \ref{generic} follows easily. 
For $n \geq 3$ and $F: X \to \Bbb C^n$, we produce an orientable 
submanifold $A \subset \Bbb C^n$ nicely bounding $D$ as in 
Remark \ref{existence}. 
The hypersurface $D$ has finitely generated fundamental group 
because it is a finite CW complex \cite{dimca}. 
Since there is only one connected component and $\Sigma$ is empty, 
Remark \ref{existence}(a) gives conditions 1-3 above. 
Theorem \ref{larry} allows us to replace nicely bounding $A$ 
with $B$ satisfying $\pi_1(\Bbb C^n - {\overline B})=0$, at which point 
Remark \ref{oriented} gives the result. 
If $n=2$, we consider 
$F \times {\rm {Id}}: X \times \Bbb C \to \Bbb C^n \times \Bbb C$ and 
replace $D$ with $D \times \Bbb C$. 

We expect the surgery technique to work even if the hypersurface $D$ is bifurcated, provided that the singularities are sufficiently mild.
These problems fit in with our general program of trying to understand
local versus global injectivity \cite{NX1,NX2,NX3,X}.
Indeed, for such maps with finite fibres, we are really asking when
a local immersion is injective. 
As to the structure of the paper, Theorem \ref{counting} is proven in
$\S$ \ref{thinsets} and Theorem \ref{larry} is proven in $\S$ \ref{Larry}.
We thank Francis Connolly for useful conversations. 

\section{A Counting Argument}\label{thinsets}

In this section we prove Theorem \ref{counting} from the introduction.
The dichotomy in the cases $d=1$ and $d=\infty$ depends on the nature 
of the pre-image $F^{-1}(A) \subset X$. We will show that if no component of $F^{-1}(A)$ is closed, then $d=1$, while if at least one component is closed, then $d=\infty$. 

\begin{prop}\label{d=1}
Let $f: X \to \Bbb R^n$ be a local diffeomorphism of connected manifolds which is a $d$-sheet cover away from a closed set $D$. Further assume that there is a $(n-1)$-submanifold $A \subset \Bbb R^n$ such that
\begin{enumerate}
\item $D$ is nicely bounded by $A$.
\item $\Bbb R^n - {\overline A}$ is simply connected.
\item No connected component of $f^{-1}(A)$ is closed in $X$.
\end{enumerate}
Then $d=1$.
\end{prop}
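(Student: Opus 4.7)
The plan is to show that, under the hypotheses, each connected component $V_j$ of $f^{-1}(V)$ is dense in $X$; since distinct components of $f^{-1}(V)$ are disjoint open subsets of $X$, density then forces $d=1$.

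Simple connectivity of $V := \Bbb R^n - \overline{A}$ gives a trivialization $f^{-1}(V) = V_1 \sqcup \cdots \sqcup V_d$ with each $V_j \to V$ a diffeomorphism. The key preparatory step is to assign, to each connected component $C$ of $f^{-1}(A)$, a single index $j(C) \in \{1,\dots,d\}$. At an interior point $q \in C$ the codimension-one submanifold $C$ locally separates $X$ into two sides lying in components $V_a, V_b$ of $f^{-1}(V)$, and I claim $V_a = V_b$. Here I use hypothesis (3): since $C$ is not closed, its closure contains a point $p \in \overline{C} \setminus C$, and, using that $D_0$ is dense in $D$ (as $\Sigma \subseteq \overline{D_0}$) together with the local diffeomorphism property at every such boundary point, one checks that $p$ may be chosen in $f^{-1}(D_0)$ rather than only in $f^{-1}(\Sigma)$. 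Trace a path $\alpha : [0,1] \to C \cup \{p\}$ from $q$ to $p$ with a continuous transverse normal field $\nu(t)$ along $\alpha$. The points $\alpha(t) \pm \epsilon \nu(t)$ lie in locally constant components of $f^{-1}(V)$, namely $V_a$ and $V_b$. As $\alpha(t) \to p$, both eventually lie in a small ball $B_p$ mapped diffeomorphically to a ball around $f(p) \in D_0$; in this model $\overline{A}$ is a half-hyperplane with boundary $D_0$, so its complement in the ball is connected (one can go around the edge $D_0$). Hence both $\beta_\pm(t)$ sit in a single local component $V_{j(p)}$, forcing $V_a = V_b = V_{j(p)}$, and $j(C)$ is well-defined.

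With $j$ in hand, set $X^* := X - f^{-1}(\Sigma)$ and check that $\overline{V_j} \cap X^*$ is open in $X^*$ by case analysis of $x \in \overline{V_j} \cap X^*$: (i) for $x \in V_j$ a neighborhood in $V_j$ suffices; (ii) for $x$ in a component $C$ of $f^{-1}(A)$ with $j(C) = j$, both local sides of $C$ lie in $V_j$, so a ball around $x$ in $X$ is covered by $V_j \cup C \subseteq \overline{V_j}$; (iii) for $x \in f^{-1}(D_0)$ with $j(x) = j$, the local half-hyperplane model shows a neighborhood decomposes as $V_j \cup C(x) \cup (\text{nearby portion of } f^{-1}(D_0))$, and the conclusion of Step 2 applied to $C(x)$ places all pieces in $\overline{V_j}$. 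Thus $\overline{V_j} \cap X^*$ is both open and closed in $X^*$; since $f^{-1}(\Sigma)$ has codimension $\geq 4$ in the connected manifold $X$, the set $X^*$ is connected, and therefore $\overline{V_j} \cap X^* = X^*$, which yields $\overline{V_j} = X$. If $d \geq 2$, a second component $V_k$ would be a nonempty open subset of $X$ disjoint from $V_j$ yet contained in $\overline{V_j}$, contradicting openness of $V_k$. Hence $d = 1$.

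The technical heart of the argument, and the step I expect to be the main obstacle, is the well-definedness of $j(C)$: the hypothesis that $C$ is not closed is used essentially, and the local half-hyperplane structure of $\overline{A}$ along $D_0$ is precisely what forces the two sides of $C$ to merge near a boundary point. One must also take care to locate an accumulation point of $C$ in the smooth stratum $f^{-1}(D_0)$ rather than only in the singular stratum $f^{-1}(\Sigma)$, which is what the density $\Sigma \subseteq \overline{D_0}$ is used for.
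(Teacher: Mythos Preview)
Your proof is correct and rests on the same geometric insight as the paper's: since each component $C$ of $f^{-1}(A)$ is not closed, it has a limit point $p$ over $D_0$, and the half-hyperplane local model of $\overline A$ along $D_0$ lets one ``go around the edge'' at $p$ to see that the two local sides of $C$ lie in the same sheet of the trivial cover $f^{-1}(V)\to V$.

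The organization, however, differs. The paper argues directly that $X-f^{-1}(\overline A)$ is path-connected: given $a,b$ in this set, one takes a path $\tau$ from $a$ to $b$, perturbs it to miss $f^{-1}(D)$ and cross $f^{-1}(A)$ transversely in finitely many points, and then removes each crossing by detouring through a tubular neighborhood of a path in $C$ out to a boundary point $q\in f^{-1}(D_0)$, where the complement of $\overline E$ in a small ball is connected. Your version instead extracts from the same local picture a labeling $C\mapsto j(C)$ and then runs a clopen argument on $\overline{V_j}\cap X^*$. This is a slightly more structural packaging: it avoids the transversality/path-surgery step and makes the role of the half-space model at $D_0$ very explicit, at the cost of a three-case analysis. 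Both approaches handle the singular stratum $\Sigma$ in the same somewhat informal way (choosing the boundary point to lie over $D_0$ by density); neither gains an advantage there. One small point worth tightening in your write-up: rather than a path in $C\cup\{p\}$ terminating at $p$, it suffices to reach a point of $C$ inside a small ball $B_p$, which is immediate from path-connectedness of the manifold $C$ and $p\in\overline C$.
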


\begin{proof}
It will suffice to show that $X-f^{-1}({\overline A})$ is path-connected,
for then $X-f^{-1}({\overline A}) \to \Bbb R^n - {\overline A}$ is a
$d$-sheeted covering map with $\Bbb R^n - {\overline A}$ simply connected, whence $d=1$ \cite[Ch. 2]{spanier}.
In particular, $f$ is injective and identifies $X$ with a dense open subset of $\Bbb R^n$.

To see that $X-f^{-1}({\overline A})$ is path-connected, consider
$a \neq b \in X-f^{-1}({\overline A})$ and consider
a path $\tau: [0,1] \to X$ from $a$ to $b$.
Deforming $\tau$, we may assume that
\begin{enumerate}
\item[(a)] $\tau$ avoids $f^{-1}(D)$.
\item[(b)] $\tau$ meets $f^{-1}(A)$ transversely a finite number of times.
\end{enumerate}
Suppose that $\tau$ meets a connected component $E$ of $f^{-1}(A)$, say
$\tau(t_{0}) \in E$.
Since $E$ is not closed by hypothesis, there is a point
$q \in {\overline E} - E \subset f^{-1}(D)$ which necessarily satisfies $f(q) \in \partial A = D$.
Taking an open neighborhood $U$ about $q$ which maps diffeomorphically onto the neighborhood $f(U)$ about 
$f(q) \in D$, we have a diffeomorphism of pairs 
$({\overline E} \cap U,U) \cong 
({\overline A} \cap f(U),f(U))$ and since 
$\Sigma \subset D$ has positive codimension, we can 
choose $q \in U$ so that $f(q)$ avoids $\Sigma$ and 
${\overline E}$ looks locally like a half-space at $q$.
With this in mind, consider a path $\sigma$ from $\tau(t_{0})$ to
the point $q \in {\overline E}-E$.
Cover the image of $\sigma$ with small open balls to obtain a tubular open neighborhood $N$ of $\sigma$.
A ball about $q$ is not disconnected by ${\overline E}$ and it
follows that $N-{\overline E}$ itself is connected.
We may thus replace a segment of the path $\tau$ through
$\tau(t_{0})$ with a path contained in $N-{\overline E}$.
Continuing this way with each intersection, we obtain a new path from
$a$ to $b$ which avoids $f^{-1}({\overline A})$, hence the conclusion.
\end{proof}

\begin{rmk}{\em
That $f$ be a local diffeomorphism at each point is critical in the result above. 
For example, the squaring map $f: \Bbb C \to \Bbb C$ by $f(z)=z^2$ is a local 
diffeomorphism everywhere except at $z=0$. 
Taking $D = \{0\}$ and $A$ to be the positive real axis, the other hypotheses of 
Proposition \ref{d=1} hold, but $d=2$ and the conclusion fails. 
The proof given above breaks down because $f^{-1}(A)$ is obtained from removing 
the origin from the real axis, when it is clear that $\Bbb C - f^{-1}({\overline A})$ is 
not connected: we can't connect the upper half plane and lower half plane by 
going around the corner at $z=0$ precisely because $f$ is not a local diffeomorphism 
there.
\em}\end{rmk}

Now we deal with the case where some of the connected components of
$f^{-1}(A)$ are closed. We begin with the following separation lemma.
The example $X = S^1$ shows the necessity of the vanishing $H_1(X)=0$.

\begin{lem}\label{separate}
Let $X$ be a connected $n$-manifold satisfying $H_{1}(X, \Bbb Z)=0$.
If $E_{1},\dots,E_{k}$ are $k$ disjoint closed connected
$(n-1)$-submanifolds
of $X$, then
$X-\bigcup_{i=1}^{k} E_{i}$ has $k+1$ connected components $U_{1},\dots,U_{k+1}$.
\end{lem}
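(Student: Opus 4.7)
The number of connected components of $U = X - \bigcup_{i=1}^{k} E_{i}$ equals the $\cy{2}$-dimension of $H_{0}(U;\cy{2})$, so it suffices to show $H_{0}(U;\cy{2}) \cong (\cy{2})^{k+1}$. The plan is to read this off from the long exact sequence of the pair $(X,U)$ with $\cy{2}$ coefficients. Choose pairwise disjoint tubular neighborhoods $N_{i} \supset E_{i}$, each a real line bundle over $E_{i}$; excising their complement $X - \bigcup N_i$ (which lies in the open set $U$) gives
$$H_{*}(X,U;\cy{2}) \;\cong\; \bigoplus_{i=1}^{k} H_{*}(N_{i}, N_{i}-E_{i};\cy{2}).$$
Since every real line bundle is orientable mod $2$, the Thom isomorphism yields $H_{1}(N_{i}, N_{i}-E_{i};\cy{2}) \cong H_{0}(E_{i};\cy{2}) = \cy{2}$, using that each $E_{i}$ is connected. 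Hence $H_{1}(X,U;\cy{2}) \cong (\cy{2})^{k}$.

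Next, the hypothesis $H_{1}(X,\Z)=0$ combined with the universal coefficient theorem forces $H_{1}(X;\cy{2})=0$: the Tor summand vanishes because $H_{0}(X,\Z)=\Z$ is torsion-free. The relevant segment of the long exact sequence of $(X,U)$ then reads
$$0 = H_{1}(X;\cy{2}) \to H_{1}(X,U;\cy{2}) \to \tilde H_{0}(U;\cy{2}) \to \tilde H_{0}(X;\cy{2}) = 0,$$
the right-hand zero because $X$ is connected. This forces $\tilde H_{0}(U;\cy{2}) \cong (\cy{2})^{k}$, equivalently $H_{0}(U;\cy{2}) \cong (\cy{2})^{k+1}$, which is the conclusion.

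The main technical point requiring care is that the $E_{i}$ need not have orientable normal bundles in $X$ (the Moebius band in $\R^{3}$ and the embedded Klein bottle in $\R^{4}$ mentioned elsewhere in the paper show this possibility is relevant here). An attempt to run the same argument over $\Z$ and read off components from $\mathrm{rank}\, H_{0}(U;\Z)$ would fail in exactly that situation, because the Thom isomorphism over $\Z$ requires a $\Z$-orientation of the normal bundle. Passing to $\cy{2}$ coefficients neatly sidesteps this obstruction while still detecting the component count, which is why it is the natural coefficient ring here.
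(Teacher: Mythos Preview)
Your argument is correct and takes a different route from the paper. The paper first notes that $H_1(X;\Z)=0$ forces $X$ to be orientable, then runs the long exact sequence of the pair $(X,Y)$ in compactly supported Alexander cohomology and invokes Alexander/Poincar\'e duality over~$\Z$ to identify the terms, obtaining $H_0(X-Y;\Z)\cong\Z^{k+1}$ directly. You instead use the ordinary homology long exact sequence of $(X,U)$, compute the relative group by excising to tubular neighborhoods and applying the Thom isomorphism, and work with $\cy{2}$ coefficients so that no orientability of the $E_i$ or of their normal bundles is needed. Your approach is somewhat more elementary---it avoids compactly supported cohomology and duality machinery---and your closing remark correctly pinpoints why $\cy{2}$ is the natural coefficient ring for \emph{this} particular line of argument. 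The paper's approach, by contrast, trades the normal-bundle orientability issue for the orientability of the ambient manifold $X$, which the hypothesis $H_1(X;\Z)=0$ guarantees.
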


\begin{proof}
This is result is probably well-known to experts (the case $k=1$ can be 
found in \cite{dold}), but we include a proof for lack of suitable reference.
The vanishing $H_{1}(X, \Bbb Z)=0$ implies that $X$ is orientable
\cite[VIII, 2.12]{dold}.
Let $Y = \bigcup E_{i}$.
If ${\overline H}_{c}^{q}(A,B)$
denotes the Alexander cohomology module with compact supports
and coefficients in $\Bbb Z$ \cite[\S 6.4]{spanier},
the corresponding long exact cohomology sequence contains the fragment
\[
{\overline H}_{c}^{n-1}(X) \to {\overline H}_{c}^{n-1}(Y) \to
{\overline H}_{c}^{n}(X,Y) \to {\overline H}_{c}^{n}(X) \to
{\overline H}_{c}^{n}(Y).
\]
The last group vanishes due to dimension and
the Alexander duality theorem \cite[6.9.11]{spanier}
yields
${\overline H}_{c}^{n-1}(X) \cong H_{1}(X)=0$,
${\overline H}_{c}^{n-1}(Y) \cong H_{0}(Y) \cong \Bbb Z^{k}$,
${\overline H}_{c}^{n}(X) \cong H_{0}(X) \cong \Bbb Z$.
It follows that
$H_{0}(X-Y)={\overline H}_{c}^{n}(X,Y) \cong \Bbb Z^{k+1}$,
hence $U=X-Y$ has $k+1$ connected components.
\end{proof}

\begin{prop}\label{infinity}
Let $f: X \to \Bbb R^n$ be a local diffeomorphism of connected manifolds
which is a $d$-sheet cover away from a closed set $D$ and assume that
$H_1(X, \Bbb Z)=0$.
Assume that there is an oriented $(n-1)$-submanifold
$A \subset \Bbb R^n$ such that
\begin{enumerate}
\item $D$ is nicely bounded by $A$.
\item $f^{-1}(A)$ has a connected component which is closed in $X$.
\end{enumerate}
Then $d=\infty$.
\end{prop}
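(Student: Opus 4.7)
I would argue by contradiction, assuming $d<\infty$ and constructing a null-homologous loop $L\subset X$ whose signed intersection with $E$ is nonzero. Since $H_1(X,\Bbb Z)=0$ the manifold $X$ is orientable, so fix an orientation on $X$ and pull back that of $A$ to orient $f^{-1}(A)$; then $E$ becomes a closed, oriented, codimension-one submanifold of the oriented manifold $X$.

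To locate the loop, let $A_E$ be the connected component of $A$ that $E$ covers. Condition~(2) of ``nicely bounds'' supplies a unique component $D^{(E)}$ of $D$ lying in $\overline{A_E}$, and $\overline A$ is locally diffeomorphic to a half-space along $D_0 \cap D^{(E)}$. Choose a smooth point $p \in D_0 \cap D^{(E)}$ and a small 2-disk $\Delta \subset \Bbb R^n$ transverse to $D_0$ at $p$, so that $\Delta \cap A_E$ is an open ray from $p$ and $\Delta - \{p\} \simeq S^1$. Since $f|_E: E \to A_E$ is a covering map of some degree $d_E \geq 1$, there is a point $q \in E \cap f^{-1}(p_0)$ for any $p_0 \in \Delta \cap A_E$. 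Let $C$ be the connected component of $f^{-1}(\Delta-\{p\})$ containing the arc of $E$ through $q$; because $f: f^{-1}(\Delta-\{p\})\to\Delta-\{p\}$ has at most $d$ sheets, $C$ is a connected finite cover of $\Delta-\{p\}\simeq S^1$ of some degree $k\leq d$.

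Model $C$ as the standard $k$-fold cyclic cover of a punctured disk. Then $f^{-1}(\Delta\cap A)\cap C$ consists of $k$ disjoint radial sheets over $\Delta\cap A_E$, and the monodromy action of any meridian $\mu \in \pi_1(\Delta-\{p\},p_0')$ (with $p_0' \in (\Delta-\{p\}) - A$) cycles these sheets, so the concatenation $L$ of the $k$ lifts of $\mu$ starting at the $k$ preimages of $p_0'$ in $C$ is a loop $L \subset C \subset X - f^{-1}(D)$ winding once around the missing center. Because $\mu$ meets the ray $\Delta\cap A$ transversely exactly once with a fixed sign, $L$ crosses each of the $k$ radial sheets exactly once with the same sign. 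Writing $k_E$ for the number of those sheets lying in $E$ (the sheet through $q$ being one of them, so $k_E\geq 1$), we obtain $L \cdot E = \pm k_E \neq 0$.

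On the other hand, $L$ is null-homologous in $X$, so $L=\partial\Sigma$ for some compact $2$-chain $\Sigma$, which after perturbation we may take transverse to $E$; since the closed submanifold $E$ has no manifold boundary, the compact $1$-manifold $\Sigma\cap E$ has boundary $(\partial\Sigma)\cap E = L\cap E$, which must sum to zero, whence $L\cdot E = 0$. This contradicts the previous paragraph, so $d=\infty$. The main technical point is the geometric computation $L\cdot E = \pm k_E$: one must verify, via the standard model of the $k$-fold cover of the punctured disk, that the concatenation genuinely closes into a loop crossing each radial sheet once with uniform sign. Finiteness of $d$ enters exactly here through $k\leq d<\infty$; the exponential map $\Bbb C\to\Bbb C$ with $A=(0,\infty)$ and closed $E=\Bbb R$ illustrates the infinite-degree case, where the lifts of $\mu$ never close and the argument correctly fails to produce a contradiction.
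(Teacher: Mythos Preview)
Your argument is correct and takes a genuinely different route from the paper's. The paper proceeds by separation and counting: Lemma~\ref{separate} shows the closed hypersurface $E$ splits $X$ into two pieces $U_1,U_2$; near a point $q\in A_1$ with $r$ preimages on $E$, a small ball $B$ is cut by $A_1$ into halves $B^+,B^-$, and orientability of $A$ forces all $r$ lifts of $B^+$ adjacent to $E$ to lie on the same side, say $U_1$; then a point of $B^+$ has $r$ more preimages in $U_1$ than a point of $B^-$ does, contradicting constancy of fibre cardinality for the covering $U_1 - f^{-1}(\overline{A}) \to \Bbb R^n - \overline{A}$ over a connected base. Your approach replaces this with intersection theory: you build a loop $L$ inside a finite cyclic cover of a punctured transverse $2$--disk near a smooth point of $D$ and read off $L\cdot E=\pm k_E\neq 0$. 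These are Poincar\'e--dual formulations of the same obstruction---the paper uses $H_1(X,\Bbb Z)=0$ through Alexander duality in Lemma~\ref{separate}, you use it through vanishing of the intersection pairing on boundaries. The paper's version is more elementary and makes the role of orientability concrete (the coherent arrangement of the half-balls $B_i^\pm$ along $E$); yours is homologically cleaner, localizes the construction near $D$ where the half-space structure of $\overline{A}$ is available, and makes transparent why $d=\infty$ escapes the contradiction (the lifts of $\mu$ never close up into a loop). One small point worth stating explicitly in your write-up: the uniform sign of the $k_E$ crossings follows because $f$, being a local diffeomorphism of a connected manifold, has constant orientation sign, so each local intersection sign equals that constant times the fixed sign of $\mu\cdot A$ at the single image point.
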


\begin{proof}
We assume that $d < \infty$ and draw a contradiction from the counting argument 
that gives this section its name.
Let $E$ be a connected component of $f^{-1}(A)$ which is closed in $X$.
Then $f(E)$ is contained in some connected component $A_1$ of $A$ and the 
covering property shows that $f(E)=A_1$. 
The closed sub-manifold $E \subset X$ separates $X$ into two connected components 
$U_1$ and $U_2$ by Lemma \ref{separate}. Clearly the induced maps
\[
U_i - f^{-1}({\overline A}) \to \Bbb R^n - {\overline A}
\]
have the covering property with sheet numbers adding to $d$, 
but $U_i - f^{-1}({\overline A})$ may be disconnected if 
$f^{-1}(A)$ has closed components other than $E$.

Choose $q \in A_1$ and write $f^{-1}(q) = \{p_1, \dots p_d\}$ with
$p_1, \dots, p_r \in E$ for some $1 \leq r \leq d$.
If $B$ is a sufficiently small ball centered at $q$, then by the covering 
property we may write
\[
f^{-1}(B) = B_1 \cup B_2 \cup \dots \cup B_d
\]
disjointly with each $B_i$ mapping homeomorphically onto $B$ and
$p_i \in B_i$.
Thus $B_1, \dots B_r$ meet $E$ and each $B_{r+1}, \dots B_d$ avoids $E$, 
hence is contained in either $U_1$ or $U_2$.
We may relabel the indices so that $B_{r+1} \dots B_s$ are contained in
$U_1$ and $B_{s+1} \dots B_d$ are contained in $U_2$.

Shrinking $B$ if necessary, we may further assume that
$B \cap A_1$ splits $B$ into two half balls $B^+$ and $B^-$ and that $B$ 
does not meet $A$ otherwise (in particular it intersects no other components
of $A$).
For $1 \leqslant i \leqslant r$, each $B_i$ is separated by $E$ into two
connected pieces $B^+_i$ and $B^-_i$, with $B^+_i$ (resp. $B^-_i$)
mapping homeomorphically onto $B^+$ (resp. $B^-$).

Since $E$ separates $X$, either $B^+_1 \subset U_1$ or $U_2$. We suppose that 
$B^+_1 \subset U_1$ and $B^-_1 \subset U_2$, but the proof is similar if it were the other 
way around.
From the orientability of $A_1$, it follows that the pre-images of the half balls
are coherently arranged in the sense that $B^+_i \subset U_1$ (resp. $B^-_i \subset U_2$) 
for all $1 \leqslant i \leqslant r$.
To see this, consider a path $\gamma(t)$ from $p_1$ to $p_i$ for
$0 \leqslant t \leqslant 1$.
If $N(t)$ is a vector field along $\gamma$ which points into $U_1$, 
then the vector field $df(N)$ along the loop $f \circ \gamma$ points into 
$B^+$ at time $t=0$. By invariance of local orientations under continuations 
along a closed path, $df(N)$ also points into $B^+$ at time $t=1$, which implies 
that $B_i^+ \subset U_1$. Similarly the $B^-_i$ are contained in $U_2$.

Now we arrive at a contradiction by counting pre-images.
A point in $B^+$ has $s$ pre-images in $U_1$:
$r$ of these come from the $B_i^+$ with $1 \leqslant i \leqslant r$ and the remaining 
$s-r$ come from $B_i \subset U_1$ for $r < i \leqslant s$.
Similarly a point in $B^-$ has $s-r$ pre-images in $U_1$, all coming from 
$B_i^-$ with $r < i \leq s$.
In particular the cardinality of the fibres of 
$U_1 - f^{-1}({\overline A}) \to \Bbb R^n - {\overline A}$ is not constant 
($s \neq s-r$), a contradiction as this is a covering over a connected base.
\end{proof}

\begin{thm}\label{count}
Let $X$ be a connected manifold satisfying $H_1 (X, \Bbb Z) =0$ and let
$F: X \to \Bbb R^n$ be a local diffeomorphism which is a $d$-fold
cover away from a closed codimension two subset $D \subset \Bbb R^n$.
Suppose that there is an oriented $(n-1)$-submanifold $A \subset \Bbb R^n$ such that
\begin{enumerate}
\item $D$ is nicely bounded by $A$.
\item $\Bbb R^n - {\overline A}$ is simply connected.
\end{enumerate}
Then $d=1$ or $d=\infty$.
\end{thm}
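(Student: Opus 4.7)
The plan is a direct case analysis based on the dichotomy already flagged in the introduction of this section: either every connected component of $F^{-1}(A)$ fails to be closed in $X$, or at least one connected component of $F^{-1}(A)$ is closed in $X$. These two cases are clearly exhaustive and mutually exclusive, so once they are handled separately the theorem follows.

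In the first case, I would invoke Proposition \ref{d=1}. Its three hypotheses---that $D$ is nicely bounded by $A$, that $\Bbb R^n - \overline{A}$ is simply connected, and that no component of $F^{-1}(A)$ is closed in $X$---are exactly items 1 and 2 of Theorem \ref{count} together with the standing case assumption, so the conclusion $d=1$ is immediate. (Observe that orientability of $A$ plays no role in this half.) In the second case, I would invoke Proposition \ref{infinity}. Its hypotheses---that $F$ is a local diffeomorphism of connected manifolds which is a $d$-sheet cover away from $D$, that $H_1(X,\Bbb Z)=0$, that $A$ is oriented and nicely bounds $D$, and that $F^{-1}(A)$ has a closed component---are again all in force under the assumptions of Theorem \ref{count}, yielding $d=\infty$.

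There is no real obstacle at this stage: the geometric content has already been absorbed into the two preceding propositions, and even the codimension-two assumption on $D$ in the statement is automatic from the nicely-bounding condition, since $D=\overline{D_0}$ for an $(n-2)$-submanifold $D_0$. The one delicate ingredient---the coherent-orientation counting argument that excludes every intermediate finite value of $d$ once a closed component of $F^{-1}(A)$ is present---was precisely the reason orientability of $A$ entered Proposition \ref{infinity}, and it is this hypothesis that propagates into the statement of Theorem \ref{count}. Thus Theorem \ref{count} is essentially a bookkeeping corollary of Propositions \ref{d=1} and \ref{infinity}.
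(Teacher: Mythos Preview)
Your proof is correct and follows exactly the paper's own approach: a dichotomy on whether $F^{-1}(A)$ has a closed connected component, invoking Proposition~\ref{d=1} in one case and Proposition~\ref{infinity} in the other. The additional remarks you make about how the hypotheses align and the role of orientability are accurate and simply flesh out what the paper's one-line proof leaves implicit.
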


\begin{proof} This follows from Propositions \ref{d=1} and \ref{infinity}, since either 
$f^{-1}(A)$ has a closed connected component or it does not. 
\end{proof}

\begin{rmk}{\em
If $D$ is given by the vanishing of $P(z_1, \dots z_n)$ and the hypersurface 
$P(z_1, \dots z_n)=c$ is simply connected for general $c \in \Bbb C$, then we can 
construct a suitable submanifold $A$ as follows. 
There is a finite point set $B = \{b_1, \dots, b_s\} \subset \Bbb C$
such that $\Bbb C^n - P^{-1}(B) \to \Bbb C - B$ is a locally trivial fibration with 
fibre $F$. Enlarge $B$ to assume that $0 \in B$.
Now choose disjoint open rays $l_i$ emanating from the $b_i$.
Set $L = \bigcup l_i$ and $A = P^{-1}(L)$.
Then $D = P^{-1}(B)$ has $s$ connected components and is nicely bounded by $A$.
Furthermore, since $\Bbb C^n - P^{-1}({\overline A}) \to \Bbb C - {\overline A}$ is a 
locally trivial fibration over a contractible base with simply connected fibre $F$, 
it follows that $\Bbb C^n - P^{-1}({\overline A})$ is simply connected. 
Thus Theorem \ref{count} applies and $d=1$ or $d=\infty$.
\em}\end{rmk}

Our methods also work when $A$ is not orientable, but connected. 

\begin{thm}\label{unorient}
Let $X$ be a connected manifold satisfying $H_1(X)=0$ and let 
$F:X \to \Bbb R^n$ be a local diffeomorphism which is a $d$-fold cover away 
from a closed subset $D \subset \Bbb R^n$. 
Suppose that there is a connected $(n-1)$-submanifold $A \subset \Bbb R^n$ such that
\begin{enumerate}
\item $D$ is nicely bounded by $A$.
\item $\Bbb R^n - {\overline A}$ is simply connected.
\end{enumerate}
Then $d=1,2$ or $\infty$.
\end{thm}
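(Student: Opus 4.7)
The plan is to parallel the proof of Theorem \ref{count} and to refine the counting in Proposition \ref{infinity} to account for non-orientability of $A$. Since Proposition \ref{d=1} does not use orientability, if $F^{-1}(A)$ has no closed component then $d=1$. Assume therefore that $d<\infty$ and that $F^{-1}(A)$ has closed components $E_1,\ldots,E_k$ with $k\ge 1$; the goal is to force $d=2$.

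The first main step is to show that each closed $E=E_i$ is a $2$-fold cover of $A$. Run the setup of Proposition \ref{infinity}: a point $q\in A$ has pre-images $\{p_1,\ldots,p_d\}$ with $p_1,\ldots,p_r\in E$, so that $E$ separates $X$ into $U_1,U_2$ by Lemma \ref{separate} and for each $i\le r$ one of the half-balls $B_i^{\pm}$ lies in $U_1$. Non-orientability of $A$ breaks the coherent arrangement of the orientable proof, so both configurations can occur; let $a$ count the $i\le r$ with $B_i^+\subset U_1$ and $b=r-a$ count the rest. The fiber-count of $U_1-F^{-1}(\overline A)\to \R^n-\overline A$ over the connected base still forces $a=b$, so $r$ is even but not yet forced to $2$. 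To pin $r=2$ I will exploit simple connectivity of $\R^n-\overline A$: the cover $X-F^{-1}(\overline A)\to \R^n-\overline A$ is then trivial, decomposing as $V_1\sqcup\cdots\sqcup V_d$ with each $V_j\to \R^n-\overline A$ a homeomorphism. For each $p\in E$ the $U_1$-side of a small neighborhood of $p$ lies in a unique $V_{j(p)}$, and $j(p)$ is locally constant, hence constant $j^*$ on the connected set $E$. Thus every half-ball lying on the $U_1$-side of $E$ over $q$ sits inside the single $V_{j^*}$, which as a homeomorphic copy of $\R^n-\overline A$ can contain at most one copy of $B^+$ and one of $B^-$. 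Hence $a\le 1$ and $b\le 1$, giving $r=a+b\le 2$; since $E$ (a separating codimension-one submanifold of the orientable $X$) is orientable while $A$ is not, the cover $E\to A$ factors through the orientation double cover of $A$, so $r$ is even and $\ge 2$, forcing $r=2$.

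Next I combine this with a global count. Lemma \ref{separate} cuts $X$ along $\bigcup E_i$ into $k+1$ regions $W_1,\ldots,W_{k+1}$. Inside each $W_j$ only non-closed components of $F^{-1}(A)$ occur, and the bypassing argument of Proposition \ref{d=1} carries over inside $W_j$: the nicely bounding local half-space structure near $F^{-1}(D)$ is inherited from $A$ via the local diffeomorphism $F$, and bypass arcs can be chosen to stay inside $W_j$. Thus $W_j-F^{-1}(\overline A)$ is path-connected, and $X-F^{-1}(\overline A)$ has exactly $k+1$ components; matching the trivial-cover count forces $d=k+1$. On the other hand the $k$ closed components alone contribute $2k$ pre-images of $q$, so $d\ge 2k$. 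Combining, $k+1\ge 2k$, i.e., $k\le 1$, and $k=1$ yields $d=2$.

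The principal obstacle is the step pinning $r=2$. The connected-base count alone yields only $a=b$, admitting every even $r$; only the rigidity of the trivial-cover decomposition over the simply connected $\R^n-\overline A$ collapses $j(p)$ to a single value and rules out $r>2$.
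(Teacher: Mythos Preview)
Your proof is correct, and the overall architecture matches the paper's: Proposition~\ref{d=1} handles the case of no closed components, Lemma~\ref{separate} cuts $X$ along the closed components $E_1,\dots,E_k$ into $k+1$ regions, the bypassing argument shows each region minus $F^{-1}(\overline A)$ is connected, and comparison with the trivial $d$-sheeted cover over the simply connected base gives $d=k+1$. Where you diverge is in the endgame. The paper argues by pigeonhole: since $\sum_{i=1}^{k} d_i \le d = k+1$ with each $d_i\ge 1$, if $d>2$ then some $d_i=1$, and for that component the counting argument of Proposition~\ref{infinity} runs with $r=1$, where the coherent-arrangement step is vacuous and no orientability is needed. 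You instead prove the stronger intermediate fact that \emph{every} $d_i=2$, using the observation that the $U_1$-side of $E$ lies in a single sheet $V_{j^*}$, so that sheet can absorb at most one $B^+$ and one $B^-$; combined with $a=b$ this pins $r=2$, and then $2k\le d=k+1$ forces $k\le 1$.

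Two remarks. First, your appeal to the orientation double cover (``$E$ orientable while $A$ is not'') is both unjustified---the theorem does not assume $A$ non-orientable---and unnecessary: you already have $r=a+b=2a$ even from the fiber count, and $r\ge 1$ trivially, so $r=2$ follows from $r\le 2$ alone. Second, your route is longer but buys a sharper structural statement (each closed component is exactly a double cover of $A$), whereas the paper's pigeonhole is quicker and more elementary, needing only the $r=1$ instance of the counting contradiction.
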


\begin{proof}
If no connected component of $f^{-1}(A)$ is closed, then Proposition \ref{d=1} still 
applies and we obtain $d=1$. If there is a closed such component, we must 
modify the proof of Proposition \ref{infinity}. 
Here we consider {\it all} the closed connected components 
$E_1, E_2, \dots E_m$ of $f^{-1}(A)$. 
By Lemma \ref{separate} these separate $X$ into $m+1$ connected components 
$V_1, V_2, \dots V_{m+1}$. 
Now each $V_i - f^{-1}({\overline A}) \to \Bbb R^n - {\overline A}$ has the covering 
property.
Moreover, we may argue as in Prop. \ref{d=1} that each $V_i - f^{-1}({\overline A})$ 
is connected: that $f$ is a local diffeomorphism allows us to ``go around the edge" in the 
same way. 
Since the base is $1$-connected, we conclude that these maps are homeomorphisms and $m+1=d$. 

Letting $d_i$ be the covering degree of the restriction $f: E_i \to A$, we have 
$\sum_{i=1}^{d-1} d_i \leq d$.
If $d > 2$, this forces $d_i=1$ for some $i$ (since $d_i \geq 1$ already), say $i=1$.
Then $E_1 \cong A$ via $f$ and we run the proof of Proposition \ref{infinity} with 
$E = E_1$.
Things proceed as before, though now $r=1$. 
We don't need to use the orientability of $A$ to check that the $B_i^+$ can be 
coherently arranged for $1 \leq i \leq r$, since there is only one. 
Thus the counting contradiction runs as before, as the preimage of a point 
in $B^+$ has $s$ pre-images in $U_1$ while a point in $B^-$ has $s-r=s-1$ such pre-images.
Hence $2 < d < \infty$ gives a contradiction, leaving $d=1,2$ or $\infty$. 
\end{proof}

\begin{rmk}\label{d=2}{\em
Comparing to Theorem \ref{count}, we see that when $A$ is not 
orientable the new possibility $d=2$ arises. 
We give an example in which this new possibility is realized. 

Fix an unknotted circle $C_0 \subset \R^3$ and consider its normal 
disk bundle, which is a solid torus. Inside the solid torus there is a Moebius 
band $M$ which makes one half twist as it goes around.
The boundary $C_\partial = \partial M$ of this Moebius band is a 2-1 torus 
knot which is known to be the unknot.
We will show that $M \times (-2,2) \subset \Bbb R^4$ nicely bounds a closed set $K$, 
which can be smoothed to obtain a Klein bottle $K_1$ for which $\pi_1(\Bbb R^4 - K) \cong \cy2$. 
In particular, the composition $X \to \Bbb R^4 - K \hookrightarrow \Bbb R^4$ of the universal 
covering map with the inclusion has degree $d=2$. 

To verify the claims above, let $H_+=\R^3 \times[0,\infty) \subset \Bbb R^4$ be the 
4-dimensional half space.
For each integer $n>0$ we set $t_n^\pm = 2\pm\frac{1}{n}$ and define the 
manifold $W_n \subset H_+$ by 
\[
W_n = \bigl((\R^3 - C_\partial)\times\left[0,t_n^-\right] \bigr) \cup
\bigl(\R^3-M)\times\left[t_n^-,t_n^+\right]\bigr) \cup
\bigl(\R^3\times\left[t_n^+,\infty\right)\bigr).
\]
Notice that 
$W = \displaystyle\mathop{\cup}_{n=1}^\infty W_n = 
H_+-(C_\partial \times [0,2] \cup M \times \{2\})$ 
is just $H-+$ minus a Moebius band.
It is a bit angular, but we will round off the corners later. 
More importantly, the inclusion $W_n\subset W_{n+1}$ is a deformation retract and 
hence a homotopy equivalence: thus $\pi_1(W)$, as the limit of the $\pi_1(W_n)$, 
is isomorphic to $\pi_1(W_n)$ for any $n$.

We compute $\pi_1(W_1)$ as follows.
Apply Van Kampen's theorem to  
\[
V=\bigl((\R^3 - C_\partial)\times [0,1] \bigr) \cup \bigl( (\R^3 - 
M)\times [1,3]\bigr).
\]
The intersection of the two pieces is $(\R^3 - M) \times 1$ and the two inclusions
$(\R^3 - M) \times 1 \subset (\R^3 - M) \times [1,3]$ and 
$(\R^3 - C_\partial)\times 0 \subset (\R^3 - C_\partial) \times [0,1]$
are homotopy equivalences. 

Since $C_\partial$ is unknotted, $\pi_1(\R^3 - C_\partial) \cong \Z$ and
a generator is any small circle $L_\partial$ linking $C_\partial$.
Since $\pi_1(\R^3 - C_\partial)$ is Abelian, base points are not important.
Recall $C_0\subset M$ is the middle circle in the Moebius and note 
that the inclusion $\R^3 - M \subset \R^3 - C_0$ is a homotopy equivalence.
Since $C_0$ is unknotted, $\pi_1(\R^3 - M) \cong \Z$. One choice 
of generator is to pick a meridian circle to the solid torus containing $M$ and push 
it out just a bit until it no longer intersects the solid torus, call this circle $L_0$.
Since $\pi_1(\R^3 - M)$ is abelian, base points are not important here.

Finally compute the map $\pi_1(\R^3 - M) \to \pi_1(\R^3 - C_\partial)$.
The obvious disk bounding the generator of  $\pi_1(\R^3 - M)$ intersects
$C_\partial$ in two points, both with the same sign, hence the map is multiplication 
by $\pm 2$: $L_0 = \pm 2 L_\partial$.
Thus $\pi_1(V)\cong \Z$ with generator $L_\partial$.

To finish, we apply Van Kampen's theorem to 
\[
W_1 = V \cup \bigl(\R^3\times[3,\infty)\bigr).
\]
The intersection is $\bigl((\R^3 - M)\times 3\bigr)$.
Since $\pi_1\bigl(\R^3\times[3,\infty)\bigr) = 0$, it follows that
$\pi_1(W_1) = \pi_1(V) / \pi_1\bigl((\R^3 - M)\times 3\bigr)$ which 
by the above calculation is $\cy{2}$. 
We conclude that $\pi_1(W) = \pi_1(W_1) = \cy{2}$
generated by a circle in $(\R^3 - C_\partial)\times 0$ linking $C_\partial$.

Having produced $W = W_+ \subset H_+$ above, we mirror the constuction to obtain
$W_- \subset H_- = \R^3 \times(-\infty,0]$ whose complement is a Moebius band. 
Gluing $H_+$ to $H_-$ along $\R^3 \times 0$ gives $\R^4$ which contains 
$M \times [-2,2]$: let $\iota\colon M\times [-2,2] \hookrightarrow \R^4$ be the embedding.
The boundary of $M\times [-2,2]$ is a non-orientable surface of Euler
characteristic $0$, hence a Klein bottle.
Let $K = \iota\bigl(\partial(M\times[-2,2])\bigr)$.
By construction $H_\pm - K = W_\pm$ and again Van Kampen's theorem gives 
$\pi_1(\R^4 - K) = \cy{2}$.

This $K$ is certainly not smooth but by rounding corners we can make 
it smooth.
A direct approach in this example is to find a $C^\infty$ function
$f\colon M \to \left[\smfrac{1}{2},1\right]$ which is $\smfrac{1}{2}$
on a neighborhood of $\partial M$ and $1$ on a neighborhood of the core.
Let $F\colon M\times[-2,2] \to M\times[-2,2]$ be defined by $F(m,t) = 
\bigl(m,f(m)\cdot t\bigr)$
Let $W = F\bigl(M\times[-2,2]\bigr)\subset M\times\R$ be the image of 
$F$.
The boundary of $W$ is still a Klein bottle and $W$ is a $C^\infty$ 
submanifold of
$M\times \R$.
Then the composition
$\bar{\iota}\colon W \subset M\times\R \rta{\iota} \R^4$ is $C^\infty$.
Hence $K_1=\bar{\iota}\bigl(\partial W\bigr)$ is a smoothly embedded
Klein bottle, bounding a smoothly embedded $3$-manifold with
$\pi_1(\R^4 - K_1) = \cy{2}$.
\em}\end{rmk}

\section{Construction of bounding manifolds}\label{Larry}

In this section we prove Theorem \ref{larry} from the introduction. 
The main point is Lemma \ref{reduction}, which uses surgery to annihilate elements in the fundamental group of the bounding manifold. We will use the following general set up.
\begin{hyp}\label{fixed hyp}
Let $M$ be an $n$-manifold with $(n-1)$-submanifold $A \subset M$ and closed subset $D$. The triple $(M,D,A)$ satisfies Hypothesis \ref{fixed hyp} if the following hold.
\begin{enumerate}
\item[(a)] $M$ and $A$ are orientable.
\item[(b)] $D$ has finitely many components and is nicely bounded by $A$. In particular $D$ is the closure of an $(n-2)$-submanifold $D_0$ with singular set $\Sigma = D - D_0$ of codimension $\geq 4$ in $M$. 
\gdef\zeroConn{(a)}
\item[(c)] Each component of $A$ has finitely generated fundamental group.
\gdef\pioo{(d)}
\item[(d)] For each $x_0 \in D-\Sigma$, the map
$\pi_1(D-\Sigma,x_0) \to \pi_1(\,{\overline A}-\Sigma ,x_0)$ is onto.
\end{enumerate}
\end{hyp}

\begin{rmks}{\em
Since ${\overline A} - \Sigma$ is a manifold with boundary,
$A \subset {\overline A} - \Sigma$ is a homotopy equivalence.
Since $\Sigma$ has codimension $2$,
$\pi_1(D - \Sigma,x_0)\to \pi_1(D,x_0)$ is onto and
$\pi_1(M - \Sigma,x_0) \to \pi_1(M,x_0)$ is an isomorphism.
If $M$ is connected, so is $M - {\overline A}$.}
\end{rmks}

We will need the following definition.

\begin{defn}{\em
If $A$ and $B$ are submanifolds of $M$, we say that $A$ and $B$ are \emph
{equal at $\infty$}
provided there exists a compact set $C\subset M$ such that
$A-C$ and $B-C$ are equal as subsets of $M - C$.}
\end{defn}

\begin{thm}\label{surgery thm}
If the triple $(\R^n,D,A)$ satisfies \ref{fixed hyp} and $n\geqslant 6$,
then there exists a submanifold $B \subset \R^n$ such that $(\R^n, D, B)$ 
also satisfies \ref{fixed hyp}, $B$ is equal to $A$ at $\infty$ and
$$\pi_1\bigl(\,\R^n - {\overline {B}}\bigr) = 0.$$
\end{thm}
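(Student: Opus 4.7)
The plan is to prove Theorem \ref{surgery thm} by an induction that kills one generator of $\pi_1(\R^n - \overline A)$ at a time via a compactly supported ambient $2$-surgery on $A$. The key reduction lemma should say: if $\alpha \in \pi_1(\R^n - \overline A)$ is non-trivial, then there is an $A'$ equal to $A$ at infinity, still satisfying Hypothesis \ref{fixed hyp}, with $\alpha$ dying in $\pi_1(\R^n - \overline{A'})$. For the induction to terminate, I first need $\pi_1(\R^n - \overline A)$ finitely generated. Since $A$ is orientable of codimension one, a closed tubular neighborhood $V$ of $\overline A$ is trivial and $\partial V$ is the double of $\overline A$ along $D$; using (c) and (d) with van Kampen, $\pi_1(\partial V)$ is finitely generated. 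Van Kampen applied to $\R^n = V \cup (\R^n - \mathrm{int}\,V)$ combined with $\pi_1(\R^n)=0$ then presents $\pi_1(\R^n - \overline A)$ as a quotient of $\pi_1(\partial V)$.

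To execute the reduction, represent $\alpha$ by an embedded loop $S \subset \R^n - \overline A$. Since $S \cap A = \emptyset$ and $A$ is a Seifert chain for $D_0$, the linking number of $S$ with each component of $D_0$ is zero, so after adjusting $S$ within its homotopy class (using $n \geq 5$) we may take $S$ to bound an embedded $2$-disk $\Delta \subset \R^n$ disjoint from $D$ and in general position with $A$. Then $\Delta \cap A$ is a disjoint union of embedded circles $c_1,\ldots,c_k$ in the interior of $\Delta$. Choose $c_i$ innermost, bounding a subdisk $\Delta_i \subset \Delta$ with interior disjoint from $A$. The hypothesis $n \geq 6$ makes the framing obstruction vanish, so $\Delta_i$ thickens to an embedded $\Delta_i \times D^{n-2} \subset \R^n$ whose restriction to $c_i \times D^{n-2}$ agrees with the tubular neighborhood of $c_i$ in $A$. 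Ambient $2$-surgery replaces $c_i \times D^{n-2}$ in $A$ by $\Delta_i \times S^{n-3}$; a pushoff of $\Delta_i$ through the core lies in the complement of the new manifold, so $\Delta$ can be rerouted to have one fewer intersection with it. Iterating kills $\alpha$.

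The hard part will be verifying that the modified triple $(\R^n, D, A')$ still satisfies Hypothesis \ref{fixed hyp}. Parts (a) and (b) are nearly automatic: the surgery is compactly supported in the interior of $A$ away from $D$, so $A'$ equals $A$ at infinity, remains orientable via an orientation-compatible framing, keeps the same number of components (since $S^{n-3}$ is connected for $n \geq 4$ and $c_i$ has codimension $\geq 3$ in $A$), and nicely bounds $D$ with the same $\Sigma$. Van Kampen gives $\pi_1(A') \cong \pi_1(A) / \langle [c_i]\rangle$, preserving (c). The most delicate condition is (d); it is preserved because the attached handle $\Delta_i \times S^{n-3}$ has trivial $\pi_1$ for $n \geq 5$, so the surgery only adds a relation in $\pi_1(\overline A - \Sigma)$ without introducing new generators, and the composition $\pi_1(D-\Sigma) \to \pi_1(\overline A - \Sigma) \twoheadrightarrow \pi_1(\overline{A'} - \Sigma)$ remains surjective. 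Applying the reduction once per generator of $\pi_1(\R^n - \overline A)$ produces the desired $B$ in finitely many steps.
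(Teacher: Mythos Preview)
Your strategy differs from the paper's in a crucial way. The paper does \emph{not} try to kill elements of $\pi_1(\R^n-\overline A)$ directly. Instead it runs the induction on the number of generators of $\pi_1(A)$: repeated 2-surgeries on $A$ (Lemma~\ref{reduction}) reduce $\pi_1(A)$ to the trivial group, and then a separate argument (Lemma~\ref{simply-connected A}) shows that $\pi_1(B)=0$ together with Hypothesis~\ref{fixed hyp} forces $\pi_1(M-\overline B)=0$. The virtue of this route is that the inductive variant is transparent: a 2-surgery on $A$ along $c$ gives $\pi_1(A')\cong\pi_1(A)/\langle\!\langle c\rangle\!\rangle$, so the generator count genuinely drops.

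Your proposal has a real gap exactly at this point. Your reduction step shows only that a chosen $\alpha$ dies in $\pi_1(\R^n-\overline{A'})$; it does \emph{not} show that $\pi_1(\R^n-\overline{A'})$ is a quotient of $\pi_1(\R^n-\overline{A})$, so ``applying the reduction once per generator'' need not terminate. Attaching a 2-handle to $A$ is not the same as attaching a 2-handle to the complement: from the point of view of $\R^n-\overline A$ you are deleting a neighborhood of $\Delta_i\times S^{n-3}$ and adding a tunnel through the slab $c_i\times D^{n-2}$, and nothing you wrote rules out new $\pi_1$-generators appearing in that exchange. You would need an additional lemma of the form ``ambient 2-surgery on $A$ induces a surjection on $\pi_1$ of the complement,'' and that is neither stated nor obvious.

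There is a secondary gap in your finite-generation paragraph. From the van~Kampen decomposition $\R^n=V\cup(\R^n-\mathrm{int}\,V)$ and $\pi_1(\R^n)=0$ you can conclude at best that $\pi_1(\R^n-\overline A)$ is \emph{normally} generated by the image of $\pi_1(\partial V)$, not that it is a quotient of $\pi_1(\partial V)$; and a group normally generated by a finite set need not be finitely generated. The paper avoids this issue entirely because Hypothesis~\ref{fixed hyp}(c) hands you finite generation of $\pi_1(A)$ directly, and that is the group on which the paper's induction runs.
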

\bigskip

\begin{proof}
We reduce the proof to the two lemmas which follow. 
Remove the singular set $\Sigma$ to obtain the new triple 
$(M,D_0,A)=(\Bbb R^n-\Sigma, D-\Sigma, A)$, 
which clearly satisfies \ref{fixed hyp}.
Lemma \ref{reduction} allows us to replace $A$ with a new submanifold 
$A^\prime$ which is equal to $A$ at infinity and such that 
$\pi_1(A^\prime)$ requires one fewer generators than $\pi_1(A)$ 
(by which we mean the free product of $\pi_1(A_i,x_i)$, where one 
basepoint $x_i$ is chosen from each connected component $A_i$, as $A$ 
may be disconnected).
Because $A^\prime$ and $A$ are equal at infinity, the new triple
$(M,D_0,A^\prime)$ satisfies \ref{fixed hyp} and
$\R^n - {\overline {A^\prime}} = M - {\overline {A^\prime}}$.
Note the closures take place in different sets: the left-hand closure 
is equal to the right-hand closure union $\Sigma$.
Repeated application reduces to the case that $\pi_1(A)$ is trivial, 
at which point Lemma \ref{simply-connected A} shows that
$\pi_1(M-{\overline A})=0$.
\end{proof}

\begin{lem}\label{simply-connected A}
Suppose that $(M, D_0, B)$ satisfies \ref{fixed hyp} with 
$\Sigma = \emptyset,\; \pi_1(M) = 0$ and $\pi_1(B) = 0$.
Then $\pi_1\bigl(M - {\overline B}\bigr)=0$.
\end{lem}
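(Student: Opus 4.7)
The plan is to exhibit a tubular open neighborhood $\mathcal{N}$ of $\overline{B}$ in $M$ such that $\mathcal{N}\setminus\overline{B}$ deformation retracts to the topological double $D\overline{B} = \overline{B}\cup_{D_0}\overline{B}$, and then apply Van Kampen to the open cover $M = (M\setminus\overline{B})\cup\mathcal{N}$.

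Because $M$ and $B$ are orientable, the normal line bundle of $B$ in $M$ is trivial; and since $D_0 = \partial\overline{B}$ has normal bundle $\nu_{D_0/\overline{B}}\oplus \nu_{\overline{B}/M}|_{D_0}$ built from two trivial line bundles, $D_0$ sits in an open tubular neighborhood $D_0\times\R^2\subset M$ in which $\overline{B}$ appears as $D_0 \times L$, with $L = [0,\infty)\times\{0\}$ a ray from the origin. Patching this model with a standard two-sided neighborhood $B\times(-1,1)$ away from $D_0$ produces an open $\mathcal{N}\supset\overline{B}$ that deformation retracts onto $\overline{B}$. In particular $\pi_1(\mathcal{N}) = \pi_1(B) = 0$, using that the collar inclusion $B\hookrightarrow\overline{B}$ is a homotopy equivalence.

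To compute $\pi_1(\mathcal{N}\setminus\overline{B})$: away from $D_0$, $\mathcal{N}\setminus\overline{B}$ looks like $B\times((-1,0)\sqcup(0,1))$, while near $D_0$ it is $D_0\times(\R^2\setminus L)$, the slit plane. The slit plane is contractible, with its two edges (approaching $L$ from $y>0$ and $y<0$) retracting onto the origin. Splicing these local retractions glues the two normal sheets of $B$ into two copies of $\overline{B}$ meeting along $D_0$, yielding a deformation equivalence $\mathcal{N}\setminus\overline{B}\simeq D\overline{B}$. Since $\overline{B}$ is nicely bounded, each component $\overline{B}_i$ contains a unique and connected $D_{0,i}$, and Van Kampen gives
\[
\pi_1(D\overline{B}_i) \;=\; \pi_1(\overline{B}_i)\,*_{\pi_1(D_{0,i})}\,\pi_1(\overline{B}_i) \;=\; 1\,*_{\pi_1(D_{0,i})}\,1 \;=\; 1.
\]

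To conclude, write $\overline{B} = \overline{B}_1\sqcup\cdots\sqcup\overline{B}_k$ with pairwise disjoint tubular neighborhoods $\mathcal{N}_j$, and set $U_0 = M\setminus\overline{B}$, $U_j = U_{j-1}\cup\mathcal{N}_j$. The remark following Hypothesis \ref{fixed hyp} gives that $U_0$ is connected, and each $U_j$ remains connected because $\mathcal{N}_j$ is and the intersection $U_{j-1}\cap\mathcal{N}_j = \mathcal{N}_j\setminus\overline{B}_j$ is nonempty. By the preceding step $\pi_1(\mathcal{N}_j\setminus\overline{B}_j) = 0$ while $\pi_1(\mathcal{N}_j) = 0$; Van Kampen then yields $\pi_1(U_j) \cong \pi_1(U_{j-1})$ at each stage. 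Since $U_k = M$ has $\pi_1 = 0$, we conclude $\pi_1(M\setminus\overline{B}) = 0$. The main obstacle is justifying the homotopy equivalence $\mathcal{N}\setminus\overline{B}\simeq D\overline{B}$: one must verify that the local retraction inside the slit plane, which identifies the two sides of the slit with the origin, glues with the two-sided retraction of $B\times((-1,0)\sqcup(0,1))$ onto $B\sqcup B$ so that the two sheets of $B$ are identified along $D_0$ into the connected double, rather than remaining as two disjoint copies of $\overline{B}$.
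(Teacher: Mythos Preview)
Your approach is correct and genuinely different from the paper's. The paper proves the lemma by a direct geometric argument: given a loop in $M-\overline{B}$, extend it to an embedded disk in $M$ (using $\pi_1(M)=0$ and $n\geqslant 6$), make the disk transverse to $\overline{B}$ so that the intersection consists of finitely many circles and arcs, and then eliminate these one at a time. Circles are removed by bounding them with disks in $B$ (using $\pi_1(B)=0$ and $\dim B\geqslant 5$) and pushing off along the normal line bundle; arcs are removed by closing them up through $D_0$ and performing a Whitney-type isotopy. This is hands-on surgery in the spirit of the rest of \S\ref{Larry}.

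Your argument instead identifies the homotopy type of a punctured tubular neighborhood $\mathcal{N}\setminus\overline{B}$ with the double $D\overline{B}$ and then runs Van Kampen on the cover $M=(M\setminus\overline{B})\cup\mathcal{N}$. Since $\pi_1(\overline{B}_i)=1$ and each $D_{0,i}$ is connected (condition~2 of ``nicely bounds''), Van Kampen gives $\pi_1(D\overline{B}_i)=1*_{\pi_1(D_{0,i})}1=1$, and a second application yields $\pi_1(M\setminus\overline{B})\cong\pi_1(M)=0$. This is cleaner and, notably, does not use the dimension restriction $n\geqslant 6$ that the paper's embedded-disk argument requires; it also does not use hypothesis~\pioo. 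The price is the technical point you yourself flag: one must carefully construct $\mathcal{N}$ as a tubular neighborhood of the manifold-with-boundary $\overline{B}$ (patching the collar model $B\times(-1,1)$ with the normal $2$-plane model $D_0\times\R^2$) and verify that the boundary $\partial\overline{\mathcal{N}}$ really is $D\overline{B}$, so that $\mathcal{N}\setminus\overline{B}$ retracts to it. This is standard but does require a paragraph of honest work; your sketch of the slit-plane picture is the right idea, and once made precise the argument goes through.
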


\begin{proof}
Let $e\colon S^1 \to M - {\overline B}$ represent an 
element of $\pi_1(M - {\overline B})$. It suffices to show that 
$e$ extends to a map of the disk $D^2 \to M - {\overline B}$.

By hypothesis we can extend $e$ to a map $e\colon D^2 \to M$ and
we may assume that $e$ is an embedding since $n\geqslant 6$.
The set ${\overline B}$ is a manifold with boundary $D_0$, so
we may further assume that $e\colon D^2 \to M$ is transverse 
to ${\overline B}$.
Now $D^2 \cap e^{-1}{\overline B}$ is a closed, compact $1$-manifold, 
hence a disjoint union of circles and closed arcs in the interior of $D^2$.
The transverse condition also guarantees that $D^2 \cap e^{-1}B$ 
consists of circles
and open arcs and $D^2 \cap e^{-1}(D_0)$ consists entirely of points which 
are the end points of the arcs: no image of a circle is tangent to $D_0$. 

Each circle in $D^2$ divides the disk into two components, an inside
(homeomorphic to a disk) and an outside (homeomorphic to an annulus).
Pick an outer-most circle $C \subset D^2$ from $e^{-1}(B)$, not contained
on the inside of any other circle of $e^{-1}(B)$.
The embedded circle $e(C)$ in $B$ is null homotopic in $B$ by hypothesis. 
Since $\dim B \geq 5$ and $e(D^2) \cap B$ 
consists of a finite number of open arcs and circles, this circle bounds an embedded disk $\Delta \subset B$ such that $\Delta \cap e(D^2) = e(C)$.

The normal bundle $\nu$ of $B$ in $M$ is a real line bundle which 
restricted to $\Delta$ is trivial.
The normal bundle to $C$ in $D^2$ is also trivial and picking the 
direction into the outside
annulus gives a trivialization of $\nu$ restricted to $e(C) = 
\partial \Delta$.
This trivialization extends across $\Delta$ and it is now possible to 
cut out a neighborhood
of the inside of $C$ and glue in a copy of $\Delta$ pushed off of $B$ (in the direction of $\nu$) so that the new embedded disk has fewer circles.
Since there are only finitely many circles, continuing this process
will result in a new embedding $e:D^2 \to M$ for which 
$D^2 \cap e^{-1}({\overline B})$ consists entirely of arcs in 
the interior of $D^2$.

Let $\alpha \in {\overline B}$ be the image of an arc, so the 
interior of the arc is in $B$ and the two endpoints are in $D_0$.
Both endpoints are in the same component of $B$ and so by 
\ref{fixed hyp}(b), they are in
the same component of $D_0 = \partial {\overline B}$, hence 
they may be joined by an arc in $\partial {\overline B}$. 
The resulting circle in ${\overline B}$ is null homotopic because $B \subset {\overline B}$ is a homotopy equivalence. 
Since $n\geqslant 6$ there is an embedded disk 
$\Delta \subset {\overline B}$ so that 
$\partial \Delta \cap \partial {\overline B}$ is an arc in 
$\partial {\overline B}$ and
$\partial \Delta \cap B$ is the open interior of the arc $\alpha$.
We can further assume that $\Delta \cap e(D^2) = \alpha$, 
so that no part of $e(D^2)$ goes near $\Delta$ except for the arc under discussion.

Enlarge $\Delta$ to a disk $\Delta_1$ with $\Delta$ in its interior so that $\Delta_1 \cap \partial{\overline B}$ is a single arc, which must extend the 
arc $\partial D \cap \partial{\overline B}$. 
This arc divides $\Delta_1$ into two pieces, one in $B$ and one in 
$M - {\overline B}$. 

The normal bundle to $\alpha$ in $e(D^2)$ is trivial, so pick a trivialization. 
By transversality this picks out a nonvanishing section of $\nu$ restricted to $\alpha$, which extends to a section of $\nu$ restricted to 
$\Delta_1 \cap {\overline B}$.
Viewing this as a section of the normal bundle to 
$\Delta_1 \cap {\overline B}$ in $M$, we may extend it to obtain 
a section $\tau$ of the normal bundle $\mu$ of $\Delta_1$ in $M$ 
which is normal to ${\overline B}$ along the common intersection.  

Now choose an isotopy of $\Delta_1$ under which $\Delta$ ends up in 
the piece of $\Delta_1$ in $M - {\overline B}$. Extend this to an isotopy 
of $M$, using the section $\tau$ of $\mu$ along $\Delta_1$ to make the movement locally parallel to ${\overline B}$. 
When we apply this isotopy to $e(D^2)$, we remove the arc $\alpha$ 
and create no further intersections, thus the new embedding of the disk 
has the same boundary and intersects ${\overline B}$ in no circles and 
one fewer arc. Continue until there are no arcs (or circles). 
\end{proof}

\begin{lem}\label{reduction}
Suppose that $(M, D_0, A)$ satisfies \ref{fixed hyp} with $\Sigma=\emptyset$ and $\pi_1(A)$ is generated by $m$ elements.
Then there is a submanifold $B \subset M$ such that $(M, D_0, B)$ 
satisfies
\ref{fixed hyp}, $A$ and $B$ are equal at $\infty$ and
$\pi_1(B)$ is generated by at most $m-1$ elements.
\end{lem}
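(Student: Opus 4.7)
The plan is to perform embedded surgery on $A$ along an oriented circle representing a chosen generator of $\pi_1(A)$, with the surgery realized inside $M$. Since $\pi_1(A)$ is understood as the free product of the $\pi_1(A_i, x_i)$ over the components, reducing the number of generators by one amounts to killing one generator of $\pi_1$ of a single component. So pick a component $A_i$ with $\pi_1(A_i, x_i) \neq 0$ and a generator $g$ of this group. Because $\dim A_i = n-1 \geq 5$, general position represents $g$ by an oriented smoothly embedded circle $\gamma$ in the interior of $A_i$, placed away from the boundary $D_i$. Orientability of $A_i$ trivializes the normal bundle of $\gamma$ in $A_i$, giving a tubular neighborhood $\nu_\gamma \cong S^1 \times D^{n-2} \subset A_i$.

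Next I find an embedded $2$-disk $\Delta \subset M$ with $\partial \Delta = \gamma$ and $\mathrm{int}\,\Delta \cap A = \emptyset$. In the setting where this lemma is applied, $M = \R^n - \Sigma$ is simply connected (since $\Sigma$ has codimension $\geq 4$), so $\gamma$ bounds a continuous map $D^2 \to M$. Since $n \geq 6 > 2\cdot 2$, general position upgrades this to a smooth embedding transverse to $A$; the extra intersections are then removed by the cleanup described in the next paragraph. With such a $\Delta$ in hand, its normal bundle in $M$ is a trivial rank-$(n-2)$ bundle over a contractible base, and one chooses a framing compatible with that of $\nu_\gamma$ along $\partial \Delta = \gamma$; this yields an embedded $2$-handle $\Delta \times D^{n-2} \subset M$ whose attaching region $\partial\Delta \times D^{n-2}$ coincides with $\nu_\gamma$. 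Now set
\[
B = \bigl(A - \mathrm{int}(\nu_\gamma)\bigr) \cup \bigl(D^2 \times S^{n-3}\bigr),
\]
glued along the common boundary $S^1 \times S^{n-3}$; after smoothing the corner, $B$ is an embedded $(n-1)$-submanifold of $M$ that coincides with $A$ outside the compact set $\Delta \times D^{n-2}$, so $B$ is equal to $A$ at infinity.

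The verification of \ref{fixed hyp} for $(M, D_0, B)$ is then routine. The coherent framing preserves orientability. Surgery on a $1$-sphere inside a manifold of dimension $\geq 5$ does not disconnect (a circle has codimension $\geq 4$ there), so each component of $B$ still contains a unique component of $D_0$. The surgery is localized in the interior of $A_i$, hence $D_0 = \partial \overline{B}$ and the nice-bounding structure persists. The group $\pi_1(B_i)$ is $\pi_1(A_i)$ modulo the normal closure of $g$, so it requires at most $m-1$ generators and remains finitely generated. Finally, the surjection $\pi_1(D_i) \twoheadrightarrow \pi_1(A_i)$ from hypothesis \ref{fixed hyp}(d) composed with $\pi_1(A_i) \twoheadrightarrow \pi_1(B_i)$ gives the required surjectivity $\pi_1(D_i) \twoheadrightarrow \pi_1(B_i)$.

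The hard part is arranging $\mathrm{int}\,\Delta \cap A = \emptyset$. A generic transverse intersection of $\Delta$ (dimension $2$) with $A$ (dimension $n-1$) in $M$ (dimension $n$) has dimension $1$, so besides $\gamma$ the disk $\Delta$ may meet $A$ in circles lying in $\mathrm{int}\,\Delta$ and possibly arcs with endpoints on $\gamma$. Innermost circles $C \subset \mathrm{int}\,\Delta \cap A$ can be eliminated one at a time: a nonvanishing section of the normal line bundle of $A$ in $M$ over $C$ (available because $C \cong S^1$ is parallelizable and $A$ is a hypersurface) lets one push the sub-disk of $\Delta$ bounded by $C$ across $A$, and because the modification is local it introduces no new intersections. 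Arcs are subtler since their endpoints are pinned on $\gamma \subset A$, but a Whitney-style move exploiting the room $n \geq 6$ handles them as well, paralleling (though being somewhat more delicate than) the cleanup appearing in the proof of Lemma \ref{simply-connected A}. This intersection cleanup is the technical heart of the argument.
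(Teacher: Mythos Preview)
Your overall plan---represent a generator by an embedded circle $\gamma$, find an embedded $2$-disk $\Delta$ in $M$ with $\partial\Delta=\gamma$, clean out the extra intersections $\Delta\cap\overline A$, and then do ambient surgery---is the same as the paper's. But the cleanup step, which you correctly flag as the heart of the matter, has two genuine gaps.

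\textbf{Circles.} Your removal of an innermost circle $C\subset\mathrm{int}\,\Delta\cap A$ does not work. Having a nonvanishing section of the normal line bundle of $A$ along $C$ only lets you push $C$ slightly to one side; it does not let you ``push the sub-disk of $\Delta$ bounded by $C$ across $A$''. To replace that sub-disk by one missing $A$ you would need its slightly enlarged boundary (a push-off of $C$ to the far side of $A$) to bound a disk in $M-\overline A$, and there is no reason for that: $C$ may represent a nontrivial class in $\pi_1(A)$, and $\pi_1(M-\overline A)$ is not assumed trivial. The analogous step in Lemma~\ref{simply-connected A} succeeds only because there $\pi_1(B)=0$, so $C$ bounds a disk \emph{in} $B$. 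The paper's fix is a genuinely different idea: it does \emph{not} isotope $\Delta$ to remove $C$; instead it \emph{modifies $A$} by adding a $2$-handle along the sub-disk of $\Delta$ bounded by $C$. This replaces $A$ by $B_1$ with $\pi_1(B_1)\cong\pi_1(A)/\langle C\rangle$, still generated by $g_1,\dots,g_m$, and with one fewer circle of intersection. Iterating gives $B_k$ with $B_k\cap\Delta=\gamma$, and only then is the final handle trade along $\gamma$ performed.

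\textbf{Arcs.} The arcs in $\Delta\cap\overline A$ have their endpoints on $D_0=\partial\overline A$, not on $\gamma$ (near $\gamma$ the annulus collar forces $\Delta\cap\overline A=\gamma$). More importantly, the Whitney-type removal of such an arc $\alpha$ requires a disk in $\overline A$ bounded by $\alpha$ together with an arc in $D_0$; to arrange that $\alpha\cup\alpha'$ is null-homotopic in $\overline A$ one must invoke hypothesis~\ref{fixed hyp}\pioo, the surjectivity of $\pi_1(D_0)\to\pi_1(\overline A)$, to correct an arbitrary connecting arc $\tau\subset D_0$ by a loop $\rho$ in $D_0$. Your sketch never uses condition~\pioo, and without it the Whitney move has no disk to run on. This is exactly why \pioo\ is among the standing hypotheses.
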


\begin{proof}
Let $g_1, g_2, \dots g_m \in \pi_1(A)$ be generators.
Since $\dim A \geq 5$, the element 
$g_1$ can be represented by an embedded circle $\gamma$ on some component of $A$.

The normal bundle $\nu$ to ${\overline A}$ in $M$ is a trivial line bundle. 
Use this trivialization to embed an annulus in $M$ intersecting 
${\overline A}$ precisely in $\gamma$. 
The embedded circle $\gamma^\prime$ at the other end of the annulus is null homotopic in $M$ and extends to an embedded disk 
$\Delta \subset M$ because $M$ has dimension $ \geq 6$. 
Clearly $\Delta \cap {\overline A}$ contains $\gamma$ by construction, but it may contain more. 
We may assume that $\Delta$ meets ${\overline A}$ transversely away from $\gamma$, hence $\Delta \cap {\overline A}$ consists of $\gamma$ along 
with a finite number of circles and arcs in the interior of $\Delta$.

This time we deal with the arcs first.
Let $\alpha$ be an arc in $\Delta \cap {\overline A}$.
It lies in a component of ${\overline A}$, so both endpoints 
are in a component of $D_0 = \partial {\overline A}$ and can 
be connected by an arc $\tau$ in $D_0$. 
Choosing one endpoint $x_0$ as basepoint, condition \ref{fixed hyp}(d)  
gives a loop $\rho$ based at $x_0$ in $D_0$ such that $\rho = \alpha \cup \tau$ in $\pi_1({\overline A},x_0)$. Replacing $\tau$ with 
$\alpha^\prime = \tau \rho^{-1}$ gives an arc in $\partial{\overline A}$ for which the embedded circle $\theta = \alpha \cup \alpha^\prime$ is null homotopic in ${\overline A}$.
We may further assume that $\alpha^\prime$ intersects the finite set 
$\partial{\overline A} \cap \Delta$ only in the two endpoints of 
$\alpha$. 

Consider the open annulus $X = \Delta - \gamma - \alpha$: 
$\Delta - \gamma$ is an open $2$-disk and removing the 
arc $\gamma$ is the topologically the same as removing a point.
Then $X\cap{\overline A}$ is a finite union of circles and closed 
arcs so $\theta$ is null homotopic in 
${\overline A} - \bigl(X\cap {\overline A}\bigr)$ 
(we have removed all the circles and arcs {\it except} $\gamma$ and $\alpha$). 
Since the dimension of ${\overline A}$ is $\geq 5$, $\theta$ bounds an 
embedded disk $\Gamma \subset {\overline A}$ such that 
$\Gamma \cap \Delta = \alpha$.

Just as in the last part of the proof of Lemma \ref{simply-connected A},
we extend $\Gamma$ to a larger embedded disk $\Gamma^\prime$
such that  $\Gamma^\prime \cap \Delta$ is an arc containing $\alpha$.
There is a neighborhood $N$ of $\Gamma^\prime$
such that $N\cap \Delta$ is the original annulus.
The usual Whitney trick isotopy \cite[$\S$ 7.3]{ranicki} produces 
a new embedded disk $\Delta^\prime$ so that 
$\Delta^\prime \cap {\overline A} = 
\bigl(\Delta \cap {\overline A}\bigr) - \alpha$.
Repeating this process for each arc, we may assume that our disk 
$\Delta$ intersects ${\overline A}$ in a finite number of circles: 
all arcs are gone.

So far we have not altered $A$, but to get rid of the circles we will.
Begin by locating an inner-most circle $\gamma^\prime$, one whose interior 
contains no other circles.
Then $\gamma^\prime$ bounds an embedded disk $\Delta^\prime$ 
such that $\Delta^\prime \cap{\overline A} = \gamma^\prime$.

We perform an operation known as \emph{adding a $2$-handle} 
\cite[$\S 2.4$ and $\S 5.4$]{ranicki}.
Specifically, let $\nu$ be the normal bundle to $\gamma^\prime$ in $A$.
By tranversality this is also the restriction of $\nu^\prime$, the 
normal bundle to $\Delta^\prime$ in $M$.
Since $\nu^\prime$ is a bundle on a disk, it is trivial so fix a trivialization. 
This trivializes $\nu$.
Form the union $X = A \cup E(\nu^\prime)$ where 
$E(\nu^\prime)$ is the closed disk bundle associated to 
$\nu^\prime$. 
If $x_0$ is a basepoint in 
$A \cap E(\nu^\prime)=E(\nu)\cong \gamma^\prime \times {\overline D^{n-2}}$, 
Van Kampen's theorem produces a pushout diagram
\[
\begin{array}{ccc}
\pi_1(E(\nu),x_0) = \langle \gamma^\prime \rangle & \to & 
\pi_1(E(\nu^\prime),x_0)=\{1\} \\
\downarrow & & \downarrow \\
\pi_1(A,x_0) & \to & \pi_1(X,x_0)
\end{array}
\]
from which we see that $\pi_1(X,x_0) \cong \pi_1(A,x_0)/\langle \gamma^\prime \rangle$, where $\langle \gamma^\prime \rangle$ is the normal closure of $\gamma^\prime$. 

Now form $B_1$ from $X$ by removing the open disk bundle. 
The result is not a differentiable manifold as there are corners where $A$ meets $\partial E(\nu^\prime)$, but let us note several properties of $B_1$.
Since $\Delta \cap B_1 = \Delta \cap A - \gamma$ by construction, the new intersection has one less circle. 
Since the construction takes place within a compact set, 
$A$ and $B_1$ are equal at $\infty$ and it is clear that 
$(M,D_0,B_1)$ satisfies \ref{fixed hyp} because $A = B_1$ near $D_0$.

Now $\pi_1(B_1) \to \pi_1(X)$ is an isomorphism (by Van Kampen's theorem, for example) and we write 
$\pi_1(B_1) \cong \pi_1(A)/\langle \gamma^\prime \rangle$ so that $\pi_1(B_1)$ is still generated by $g_1 \dots g_m$.
The commutative diagram 
\[
\begin{array}{ccc}
\pi_1(D_0) & \to & \pi_1({\overline {B_1}}) \\
\downarrow & & \downarrow \\
\pi_1({\overline A}) & \to & \pi_1({\overline X})
\end{array}
\]
shows that the top map is onto. Indeed, the left vertical map is onto by hypothesis, the bottom map is onto because $\pi_1(A) \to \pi_1(X)$ is 
(the inclusions $A \subset {\overline A}$ and $X \subset {\overline X}$ are homotopy equivalences) and the right vertical map is an isomorphism because $\pi_1(B_1) \to \pi_1(X)$ is. 

Finally we round the corners of $B_1$ to obtain a differentiable manifold. 
This process is a homeomorphism (though not a diffeomorphism) so we retain all the properties of $B_1$ above. By the same process we remove an inner-most circle of $B_1\cap \Delta$ to get $B_2$ and 
so on until we find a $B_k$ such that $B_k\cap \Delta = \gamma$, 
$(M, D_0, B_k)$ satisfy \ref{fixed hyp} and $B_k = A$ at infinity.

Now we can trade the $2$-handle $\Delta$ to get $B$ where $A$ and $B$ 
are equal at $\infty$, $(M, D_0, B)$ satisfy \ref{fixed hyp} and 
$\pi_1(A)$ maps onto $\pi_1(B)$ with $\gamma$ definitely in the kernel, 
hence $\pi_1(B)$ is generated by $g_2$, \dots, $g_m$.
\end{proof}

\end{document}